\font\smallrm=cmr8
\font\bigmath=cmsy10 scaled \magstep 2
\newcommand{\emp}{\emptyset}
\newcommand{\ben}{\mathbb N}
\newcommand{\ber}{\mathbb R}
\newcommand{\beq}{\mathbb Q}
\newcommand{\bez}{\mathbb Z}
\newcommand{\nhat}[1]{\{1,2,\ldots,#1\}}
\newcommand{\ohat}[1]{\{0,1,\ldots,#1\}}
\newcommand{\rank}{\hbox{\rm rank}}
\newcommand{\sgn}{\hbox{\rm sgn}}
\newcommand{\bigtimes}{\hbox{\bigmath \char'2}} 
\newtheorem{theorem}{Theorem}[section]
\newtheorem{corollary}[theorem]{Corollary}
\newtheorem{lemma}[theorem]{Lemma}
\newtheorem{question}[theorem]{Question}
\theoremstyle{definition}
\newtheorem{definition}[theorem]{Definition}
\title{Duality for image and kernel partition regularity of infinite matrices}
\date{}
\author{Neil Hindman
        \footnote{Department of Mathematics,
                 Howard University,
                  Washington, DC 20059, USA.\hfill\break
                  {\tt nhindman@aol.com}}
        \thanks{This author acknowledges support received from the National
                Science Foundation (USA) via Grant DMS-1460023.}
\and
Imre Leader
        \footnote{Department of Pure Mathematics and Mathematical Statistics,
Centre for Mathematical Sciences,
Wilberforce Road, Cambridge, CB3K.\hfill\break
                  {\tt leader@dpmms.cam.ac.uk}}
\and
Dona Strauss
        \footnote{Department of Pure Mathematics,
        University of Leeds,
        Leeds LS2 9J2, UK. \hfill\break
        {\tt d.strauss@hull.ac.uk}}
}
\begin{document}

\maketitle

\begin{abstract}  A matrix $A$ is 
{\it image partition regular over $\beq$\/} provided that
whenever $\beq\setminus\{0\}$ is finitely coloured, there is a vector $\vec x$
with entries in $\beq\setminus\{0\}$ such that the entries of $A\vec x$ are monochromatic.  It is 
{\it kernel partition regular over $\beq$\/} provided that
whenever $\beq\setminus\{0\}$ is finitely coloured, the matrix has a monochromatic member of its kernel.
We establish a duality for these notions valid for both finite and infinite matrices.
We also investigate the extent to which this duality holds for 
matrices partition regular over proper subsemigroups of $\beq$.
\end{abstract}

\section{Introduction}

We let $\ben$ be the set of positive integers and
$\omega=\ben\cup\{0\}$.
We shall be concerned throughout this paper with
matrices (finite or infinite) that have rational
entries and finitely many nonzero entries per row. We shall
also assume that every matrix that we consider has this property.
(However, elements of $\beq^{\omega}$ are not assumed to 
have finitely many nonzero entries.)

For consistency of treatment between
the finite and infinite cases, we shall treat $u\in\ben$ as an ordinal.
Consequently $u=\ohat{u-1}$ and $\omega=\{0,1,2,\ldots\}$ is the first infinite ordinal.
Thus, if $u,v\in\ben\cup \{\omega \}$, and $A$ is a $u\times v$ matrix,
the rows and columns of $A$ will be indexed by $u=\{i:i<u\}$ and
$v=\{i:i<v\}$, respectively. (See \cite{J} for an
introduction to ordinals.)

As is standard in Ramsey Theory, a {\it finite colouring\/} of a 
set $X$ is a function whose domain is $X$ and whose range is finite.
Given a colouring $f$ of $X$, a subset $B$ of $X$ is {\it monochromatic\/}
if and only if $f$ is constant on $B$.

\begin{definition}\label{defiprkpr} Let $u,v\in\ben\cup\{\omega\}$,
let $A$ be a $u\times v$ matrix,  let $S$ be a nontrivial subsemigroup
of $(\beq,+)$, and let $G$ be the subgroup of $\beq$ generated by $S$.
\begin{itemize}
\item[(a)] The matrix $A$ is {\it kernel partition regular over $S$\/}
if and only if whenever $S\setminus\{0\}$ is finitely coloured,
there exists $\vec x\in (S\setminus\{0\})^v$ such that
$A\vec x=\vec 0$ and the entries of $\vec x$ are monochromatic.
\item[(b)] The matrix $A$ is {\it image partition regular over $S$\/}
if and only if whenever $S\setminus\{0\}$ is finitely coloured,
there exists $\vec x\in (S\setminus\{0\})^v$ such that
the entries of $A\vec x$ are monochromatic.
\item[(c)] The matrix $A$ is {\it weakly image partition regular over $S$\/}
if and only if whenever $S\setminus\{0\}$ is finitely coloured,
there exists $\vec x\in G^v$ such that
the entries of $A\vec x$ are monochromatic.
\end{itemize}\end{definition}

Notice that, since it is $S\setminus\{0\}$ that is being coloured, the
monochromatic entry which is guaranteed has no zero entries.

Many of the classical theorems of Ramsey Theory assert that certain
matrices are kernel partition regular.  For example, 
Schur's Theorem is the assertion that the matrix
$\left(\begin{array}{ccc}1&1&-1\end{array}\right)$ is 
kernel partition regular over $\ben$.  It is also the assertion that
the matrix $\left(\begin{array}{cc}1&0\\
0&1\\ 1&1\end{array}\right)$ is image partition regular over $\ben$.

For another example, the
instance of van der Waerden's Theorem that says that whenever
$\ben$ is finitely coloured, there is a monochromatic length $4$ 
arithmetic progression, is the assertion that 
the matrix
$$\left(\begin{array}{cc}
1&0\\
1&1\\
1&2\\
1&3\end{array}\right)$$
is image partition regular over $\ben$.
The strengthened version that asks that the increment also be the
same colour is the assertion that the matrix
$$\left(\begin{array}{ccccc}
1&-2&1&0&0\\
0&1&-2&1&0\\
1&-1&0&0&1\end{array}\right)$$
is kernel partition regular over $\ben$.  (There is no kernel partition regular
matrix whose kernels consist exactly of length $4$ arithmetic
progressions.  See \cite[Theorem 2.6]{H}.  Also see \cite{H} for
more background on partition regularity.)

In 1933 R. Rado \cite{R} characterised those finite matrices that 
are kernel partition regular over $\ben$.  Rado proved that
a finite matrix with entries from $\beq$ is kernel partition regular
over $\ben$ if and only if it satisfies the {\it columns 
condition\/} and that the same requirement was necessary and sufficient
for kernel partition regularity over $\beq$ and also    
necessary and sufficient
for kernel partition regularity over $\bez$.

\begin{definition}\label{defcc} Let $u,v\in\ben$ and let $A$ be a $u\times v$ matrix with entries 
from $\beq$.  Denote the columns of $A$ by $\langle\vec c_i\rangle_{i=0}^{v-1}$.
  The matrix $A$ satisfies the {\it columns condition\/} if and only if
there exist $m\in\nhat{v}$ and a partition $\langle I_t\rangle_{t=0}^{m-1}$ of 
$\ohat{v-1}$ such that
\begin{itemize}
  \item[(1)] $\sum_{i\in I_0}\vec c_i \ = \ \vec 0$ and
  \item[(2)] for each $t\in \nhat{m-1}$, if any, $\sum_{i\in I_t}\vec c_i$ is a 
linear combination with coefficients from $\beq$ of $\{\vec c_i:i\in\bigcup_{j=0}^{t-1}I_j\}$.
\end{itemize}
\end{definition}

 Call a set $X\subseteq\ben$
{\it large\/} if whenever $A$ is kernel partition regular over $\ben$,
there must exist $\vec x$ with entries in $X$ such that
$A\vec x=\vec 0$.  Rado conjectured that if a large set is finitely
coloured, there must exist a monochromatic large subset.
Rado's conjecture was proved in 1973 by W. Deuber \cite{D}.  For the
proof, Deuber used a certain collection of image partition regular
matrices, whose images he called {\it $(m,p,c)$-sets\/}.

\begin{definition}\label{defmpc} Let $m,p,c\in \ben$. A matrix
$A$ is an {\it $(m,p,c)$ matrix\/} if and only if $A$ has
$m$ columns and all rows of the form
$\vec r=\left(\begin{array}{cccc}
r_0&r_1&\ldots&r_{m-1}\end{array}\right)$ such that
each $r_i\in\{-p,-p+1,\ldots,p-1,p\}$ and
the first nonzero entry $r_i=c$.\end{definition}

Deuber's proof used three key facts.  First \cite[Satz 2.1]{D},
if $B$ is any finite kernel partition regular matrix, there
exist $m,p,c\in\ben$ such that, if $A$ is an $(m,p,c)$ matrix,
$\vec x\in\ben^m$, and the entries of $A\vec x$ are nonzero, then the set of entries
of $A\vec x$ contain the entries of a vector $\vec y$ such that
$B\vec y=\vec 0$.  Second \cite[Satz 2.2]{D},
if $m,p,c\in\ben$ and $A$ is an $(m,p,c)$ matrix, then there
exists a kernel partition regular matrix $B$ such that, if $\vec y$ has nonzero
entries and $B\vec y=\vec 0$, then the set of entries of $\vec y$ contains
the entries of $A\vec x$ for some $\vec x\in(\bez\setminus\{0\})^m$.
And finally \cite[Satz 3.1]{D}, if a set $X\subseteq\ben$ contains an image
of every $(m,p,c)$-matrix and $X$ is finitely coloured, then there 
is a monochromatic $Y$ that contains an image
of every $(m,p,c)$-matrix.  (When we write ``contains an image'' we really
mean ``contains every entry of an image''.)

In this paper we present  versions of Deuber's duality
between kernel partition regular and image partition regular matrices.
In these results we show in Theorem \ref{BgetsC} that, given a matrix $B$ that
is kernel partition regular over a nontrivial subsemigroup $S$ of $\beq$,  there is a matrix $C$ that is image 
partition regular over $S$ such that
the set of relevant images of $C$ is exactly equal to the set of relevant kernel elements
of $B$.  And, we show in Theorem \ref{AgetsB} that, given a matrix $A$ that is weakly image partition regular over $\beq$, there is
a matrix $B$ that is kernel partition regular over $\beq$ such that the set of relevant kernel elements of
$B$ is exactly equal to the set of relevant images of $A$. (Here ``relevant" means those vectors that could be 
involved in kernel or image partition regularity.)  We also show that the latter result does
not work for proper subgroups of $\beq$.

There are several questions that we are unable to resolve. These are listed at various points.

It is of particular interest that most of our results make no distinction between finite and infinite
matrices.  This is highly unusual.  For example many characterisations of 
finite image partition regular matrices are known, and Rado's characterisation of
finite kernel partition regular matrices is computable.  By way of contrast,
nothing resembling characterisations of infinite kernel or image partition 
regular matrices is known.  See the survey \cite{H} for information on
this point.

We mention in passing that very few infinite partition regular systems are known.
One example is the Finite Sums Theorem \cite{Hb}, which states that whenever the positive
integers are finitely coloured, there exists a sequence $\langle x_n\rangle_{n=0}^\infty$
such that $\{\sum_{n\in F}\,x_n:F$ is a finite nonempty subset of $\omega\}$ is
monochromatic. For other examples see \cite{HLS} and \cite{BHL}.

Also, Rado's conjecture is not valid for 
infinite matrices; there exist infinite image partition regular
matrices $A$ and $B$ and a colouring of $\beq\setminus\{0\}$ in two
colours so that neither colour class contains an image of both $A$ and $B$,
\cite{HSb}.
(And, as noted in \cite{HLS}, the corresponding
statement applies to infinite kernel partition regular matrices. This fact
also follows from Theorem \ref{AgetsB} below.)

We note that {\it duality\/} is not quite the correct word for our results, as there are 
three notions involved, namely kernel partition regularity, image partition regularity, and
weak image partition regularity. However, in the course of the preparation of this paper we 
noticed the following simple fact, which says that the relationship between
image partition regularity and weak image partition regularity is much stronger
than we had realised.  Notice that if $v=\omega$, then the matrix $C$ in the
following theorem is a $u\times\omega$ matrix.  We follow the convention of 
denoting the entries of a matrix by the lower case letter whose upper case
letter is the name of the matrix. 

The content of the following two theorems is probably best understood by
supposing that $C$ is $\left(\begin{array}{cc}A&-A\end{array}\right)$.  We
write it the way we do because, if $v=\omega$, then $2\cdot v=\omega$ so the
dimensions of $C$ are identical with those of $A$.

\begin{theorem}\label{ipreqwipr} Let $u,v\in\ben\cup\{\omega\}$ and 
let $A$ be a $u\times v$ matrix with entries from $\beq$. Define a $u\times (2\cdot v)$ matrix $C$ by, for
$i<u$ and $j<v$, $c_{i,2\cdot j}=a_{i,j}$ and $c_{i,2\cdot j+1}=-a_{i,j}$.
Then
$$\{A\vec x:\vec x\in \beq^v\}=\{C\vec y:\vec y\in(\beq\setminus\{0\})^{2\cdot v}\}\,.$$
Also $A$ is weakly image partition regular over $\beq$ if and only if $C$ is image partition
regular over $\beq$.
\end{theorem}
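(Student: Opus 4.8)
The plan is to prove the set equality first, and then deduce the partition regularity equivalence as a corollary of it.

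For the set equality, the inclusion $\{C\vec y:\vec y\in(\beq\setminus\{0\})^{2\cdot v}\}\subseteq\{A\vec x:\vec x\in\beq^v\}$ is immediate: given such a $\vec y$, set $x_j=y_{2\cdot j}-y_{2\cdot j+1}$ for $j<v$, and observe that the defining formula $c_{i,2\cdot j}=a_{i,j}$, $c_{i,2\cdot j+1}=-a_{i,j}$ gives $(C\vec y)_i=\sum_{j<v}(a_{i,j}y_{2\cdot j}-a_{i,j}y_{2\cdot j+1})=\sum_{j<v}a_{i,j}x_j=(A\vec x)_i$; note this sum is finite since each row of $A$ (hence of $C$) has finitely many nonzero entries. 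For the reverse inclusion, given $\vec x\in\beq^v$, I want to write each $x_j$ as a difference of two \emph{nonzero} rationals. The obvious choice $y_{2\cdot j}=x_j$, $y_{2\cdot j+1}=0$ fails the nonzero requirement, so instead pick a rational $t\neq 0$ and, for each $j<v$, set $y_{2\cdot j}=x_j+t$ and $y_{2\cdot j+1}=t$, except that if $x_j=-t$ we instead use $y_{2\cdot j}=x_j-t$, $y_{2\cdot j+1}=-t$ (which works since $x_j=-t\neq 0$ forces $x_j-t=-2t\neq0$). In all cases $y_{2\cdot j}-y_{2\cdot j+1}=x_j$ and both coordinates are nonzero, so $\vec y\in(\beq\setminus\{0\})^{2\cdot v}$ and $C\vec y=A\vec x$ by the computation above.

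For the second assertion, suppose first that $A$ is weakly image partition regular over $\beq$, and let $\beq\setminus\{0\}$ be finitely coloured. Weak image partition regularity gives $\vec x\in\beq^v$ with the entries of $A\vec x$ monochromatic; by the set equality there is $\vec y\in(\beq\setminus\{0\})^{2\cdot v}$ with $C\vec y=A\vec x$, so the entries of $C\vec y$ are monochromatic, witnessing that $C$ is image partition regular over $\beq$ (here the subgroup generated by $\beq\setminus\{0\}$ is all of $\beq$, so ``weakly'' and the ambient group coincide). Conversely, if $C$ is image partition regular over $\beq$, then for any finite colouring of $\beq\setminus\{0\}$ we get $\vec y\in(\beq\setminus\{0\})^{2\cdot v}$ with the entries of $C\vec y$ monochromatic; by the set equality $C\vec y=A\vec x$ for some $\vec x\in\beq^v$, and the entries of $A\vec x$ are monochromatic, so $A$ is weakly image partition regular over $\beq$.

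I do not anticipate a serious obstacle here; the only point requiring a moment's care is the reverse set inclusion, where one must dodge the zero value when expressing $x_j$ as a difference of two nonzero rationals --- the case split on whether $x_j=-t$ handles this uniformly. One should also double-check the bookkeeping that each row of $C$ inherits the finitely-many-nonzero-entries property from $A$ (it does, with at most twice as many nonzero entries per row), so that all the displayed sums are genuinely finite and $C$ is a legitimate matrix in the sense of this paper.
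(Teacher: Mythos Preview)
Your proof is correct and follows essentially the same approach as the paper. The paper actually deduces this theorem as a special case of the more general Theorem~\ref{ipreqwiprgen} for arbitrary commutative cancellative semigroups with at least three elements; there, for the reverse inclusion one simply picks, for each $j$, some $s_j\in S\setminus\{0\}$ with $x_j+s_j\in S\setminus\{0\}$ and sets $y_{2j}=x_j+s_j$, $y_{2j+1}=s_j$, which is precisely your idea with your explicit ``try $t$, else $-t$'' choice being one concrete way to realise such $s_j$ when $S=\beq$.
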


\begin{proof} Except for the fact that we allow the entries of $A$ to come from 
$\beq$, this is a special case of the following much more general theorem. And the 
proof of that theorem is unchanged as long as multiplication by rationals makes sense.
\end{proof}

The various notions of partition regularity are defined for arbitrary commutative
and cancellative semigroups in exact analogy with Definition \ref{defiprkpr}.

\begin{theorem}\label{ipreqwiprgen} Let $(S,+)$ be a commutative cancellative semigroup
with at least three elements and let $G=S-S$. Let $u,v\in\ben\cup\{\omega\}$ and 
let $A$ be a $u\times v$ matrix with entries from $\bez$. Define a $u\times (2\cdot v)$ matrix $C$ by, for
$i<u$ and $j<v$, $c_{i,2\cdot j}=a_{i,j}$ and $c_{i,2\cdot j+1}=-a_{i,j}$.  Then
$$\{A\vec x:\vec x\in G^v\}=\{C\vec y:\vec y\in(S\setminus\{0\})^{2\cdot v}\}\,.$$
Also $A$ is weakly image partition regular over $S$ if and only if $C$ is image partition
regular over $S$.
\end{theorem}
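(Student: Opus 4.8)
The plan is to prove the two displayed set equalities first, and then derive the statement about weak image partition regularity versus image partition regularity as an essentially formal consequence. For the set equality $\{A\vec x:\vec x\in G^v\}=\{C\vec y:\vec y\in(S\setminus\{0\})^{2\cdot v}\}$, I would argue by double inclusion, and the key observation to have in hand is that $C\vec y=\sum_{j<v}a_{i,j}(y_{2j}-y_{2j+1})$ row by row, so that $C\vec y=A\vec z$ where $z_j=y_{2j}-y_{2j+1}$. Thus $C\vec y$ is always of the form $A\vec z$ with $\vec z\in G^v$ (since $G=S-S$), giving the inclusion $\supseteq$. For $\subseteq$, given $\vec x\in G^v$, write each $x_j=s_j-t_j$ with $s_j,t_j\in S$; I then need $s_j,t_j$ to be nonzero, which is the one point requiring care — and this is exactly where the hypothesis $|S|\ge 3$ is used. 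Concretely, if some $x_j=0$, pick any $s\in S\setminus\{0\}$ (using $|S|\ge 2$) and write $x_j=s-s$; if $x_j=s_j-t_j$ with, say, $t_j=0$, replace the pair by $(s_j+s,s)$ for a suitable nonzero $s$, noting cancellativity keeps the difference unchanged; the three-element hypothesis guarantees we can always dodge landing on $0$ in both coordinates simultaneously. Setting $y_{2j}=s_j$, $y_{2j+1}=t_j$ then gives $\vec y\in(S\setminus\{0\})^{2\cdot v}$ with $C\vec y=A\vec x$.

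Once the set equality is established, the equivalence of the two partition-regularity notions is immediate. Suppose $A$ is weakly image partition regular over $S$: given a finite colouring of $S\setminus\{0\}$, there is $\vec x\in G^v$ with the entries of $A\vec x$ monochromatic; by the set equality there is $\vec y\in(S\setminus\{0\})^{2\cdot v}$ with $C\vec y=A\vec x$, so $C$ is image partition regular over $S$. Conversely, if $C$ is image partition regular over $S$, a monochromatic $C\vec y$ with $\vec y\in(S\setminus\{0\})^{2\cdot v}$ equals $A\vec z$ for the vector $\vec z\in G^v$ defined by $z_j=y_{2j}-y_{2j+1}$, witnessing weak image partition regularity of $A$. (One should note that $\vec z$ need not have nonzero entries, which is precisely why the target notion on the $A$ side is the \emph{weak} one.)

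Throughout I would keep an eye on the bookkeeping forced by treating $u,v$ as ordinals and allowing them to be $\omega$: the index $2\cdot j$ ranges over the even elements of $2\cdot v$ and $2\cdot j+1$ over the odd ones, and when $v=\omega$ one has $2\cdot\omega=\omega$, so $C$ genuinely has the same shape as $A$; none of the arguments above are sensitive to finiteness, since each row of $A$ (hence of $C$) has only finitely many nonzero entries and the sums in question are therefore finite. The main obstacle, as indicated, is the bookkeeping in the $\subseteq$ direction: ensuring that every coordinate $x_j$, including the zero ones, can be written as a difference of two \emph{nonzero} elements of $S$ in a uniform way. This is a small finite case analysis that uses cancellativity (to adjust a representation $s-t$ to $(s+r)-(t+r)$ without changing the difference) together with the assumption that $S$ has at least three elements, so that for any given element $s$ of $S$ there is a nonzero element $r$ with $s+r\ne 0$; once this lemma is in place the rest is routine.
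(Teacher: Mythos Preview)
Your proposal is correct and follows essentially the same route as the paper's proof: both directions of the set equality rest on the identity $C\vec y=A\vec z$ with $z_j=y_{2j}-y_{2j+1}$, and the $\subseteq$ direction uses $|S|\ge 3$ to represent each $x_j\in G$ as a difference of two nonzero elements of $S$. The paper packages that last step as the single clean observation that for every $x\in G$ there exists $s\in S\setminus\{0\}$ with $x+s\in S\setminus\{0\}$ (then set $y_{2j}=x_j+s_j$, $y_{2j+1}=s_j$), which avoids your case split, but the content is identical.
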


\begin{proof} Given $\vec y\in(S\setminus\{0\})^{2\cdot v}$,
define $\vec x\in G^v$ by $x_j=y_{2\cdot j}-y_{2\cdot j+1}$ for $j<v$.  Then $A\vec x=C\vec y$.

Note that, since $S$ has at least three elements, for each $x\in G$ there is some $s\in S\setminus\{0\}$
such that $x+s\in S\setminus\{0\}$.  
Given $\vec x\in G^v$, define $\vec y\in(S\setminus\{0\})^{2\cdot v}$ as follows.
Given $j<v$ pick $s_j\in S\setminus\{0\}$ such that $x_j+s_j\in S\setminus\{0\}$,
define $y_{2\cdot j}=s_j+x_j$ and $y_{2\cdot v+1}=s_j$. Then $A\vec x=C\vec y$.  

The fact that $C$ is image partition regular over $S$ if and only if
$A$ is weakly image partition regular over $S$ follows from the displayed 
equation. \end{proof}

The requirement that $|S|\geq 3$ in Theorem \ref{ipreqwiprgen} is needed.
Indeed, if $G=S=\bez_2$ and $A$ is the $2\times 2$ identity matrix, 
then there do not exist $w\in\ben\cup\{\omega\}$
and a $u\times w$ matrix $C$ such that 
$$\{A\vec x:\vec x\in G^2\}=\{C\vec y:\vec y\in(S\setminus\{0\})^w\}$$
because there are four elements of $\{A\vec x:\vec x\in G^2\}$ and only one
element of $\{C\vec y:\vec y\in(S\setminus\{0\})^w\}$.  However, from the
point of view of image partition regularity and weak image partition regularity,
what one cares about is the images all of whose entries are in $S\setminus\{0\}$.
For this if $A$ is a $u\times v$ matrix that is weakly image partition regular
over $S$, and $C$ is the $u\times u$ identity matrix, one does have that
$$\{A\vec x:\vec x\in G^v\}\cap(S\setminus\{0\})^u=\{C\vec y:\vec y\in(S\setminus\{0\})^u\}
\cap(S\setminus\{0\})^u\,,$$
for the trivial reason that $(S\setminus\{0\})^u$ is a singleton.

\section{Duality for finite and infinite matrices}

In this section, we prove several theorems for matrices over $\beq$. Although these are only 
stated for matrices over $\beq$, we observe that the proofs are valid for matrices over arbitrary fields.

\begin{definition}\label{dewfKR} If $u,v\in\ben\cup\{\omega\}$ and if 
$B$ is a $u\times v$ matrix with rational entries, we put 
$K(B)=\{\vec x\in \beq^v:B\vec x=\vec 0\}$, the kernel of $B$, 
and $R(B)=\{B\vec x:\vec x\in\beq^v\}$, the range of $B$.
\end{definition}

In this paper we make use of many standard facts from linear algebra. For the sake of
completeness, we give proofs when these facts may be less well known. For general background
in linear algebra, including a discussion of many related results, see for example \cite{C} or
\cite{Hal}.

\begin{lemma}\label{xinkernel}  Let $u,v\in\ben\cup\{\omega\}$ and let 
$B$ be a $u\times v$ matrix with rational entries. Assume that
the kernel $K=K(B)$ is nontrivial. Let $V$ be the vector space over $\beq$ of all linear
transformations from $K$ to $\beq$. For $i<v$ define $\pi_i\in V$ by, for $\vec x\in K$, 
$\pi_i(\vec x)=x_i$. Let $T\subseteq v$ be maximal subject to the
requirement that $\{\pi_i:i\in T\}$ is linearly independent. 
For each $i<v$ pick $\langle d_{i,j}\rangle_{j\in T}$ in $\beq$
such that $\pi_i=\sum_{j\in T}d_{i,j}\pi_j$.
\begin{itemize}
\item[(1)] If $i,j\in T$, then $d_{i,j}=0$ if $i\neq j$ and $d_{i,j}=1$ if $i=j$.
\item[(2)] If $B$ has no row identically zero, then, for each $k<u$ there is some $l\in v\setminus T$ such that
$b_{k,l}\neq 0$.
\item[(3)]  For each $\vec x\in\beq^v$, the 
following statements are equivalent.
\begin{itemize}
\item[(a)] For all $i\in v$, $x_i=\sum_{j\in T}d_{i,j}x_j$.
\item[(b)] For all $i\in v\setminus T$, $x_i=\sum_{j\in T}d_{i,j}x_j$.
\item[(c)] $\vec x\in K$.
\end{itemize}\end{itemize}\end{lemma}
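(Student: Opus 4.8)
The plan is to do everything inside $V=K^*$, where each row of the matrix equation $B\vec x=\vec 0$ becomes a single relation among the functionals $\pi_l$. Since each row of $B$ has only finitely many nonzero entries, $\sum_{l<v}b_{k,l}\pi_l$ is a well-defined element of $V$ for every $k<u$, and since $\pi_l(\vec x)=x_l$ this functional acts on $\vec x\in K$ as $(B\vec x)_k=0$; hence $\sum_{l<v}b_{k,l}\pi_l=0$ in $V$ for every $k<u$. I would record first the two standard linear-algebra facts needed: that a maximal linearly independent subset of a set spans it — so the coefficients $d_{i,j}$ exist, with only finitely many nonzero for each fixed $i$, because if $\pi_i$ lay outside the span of $\{\pi_j:j\in T\}$ then $\{\pi_i\}\cup\{\pi_j:j\in T\}$ would still be independent, contradicting maximality of $T$ — and that a vector has a unique expression as a finite linear combination of a linearly independent family.

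Part (1) is then immediate: for $i\in T$ both $\langle d_{i,j}\rangle_{j\in T}$ and the Kronecker sequence $\langle\delta_{i,j}\rangle_{j\in T}$ express $\pi_i$ as a finite combination of the independent family $\{\pi_j:j\in T\}$, so they coincide. For Part (2): if row $k$ is not identically zero yet $b_{k,l}=0$ for all $l\in v\setminus T$, then $0=\sum_{l<v}b_{k,l}\pi_l=\sum_{l\in T}b_{k,l}\pi_l$, and independence of $\{\pi_l:l\in T\}$ forces $b_{k,l}=0$ for $l\in T$ as well, making row $k$ identically zero, a contradiction. For Part (3), the implications (a)$\Rightarrow$(b) (restriction of the index set), (b)$\Rightarrow$(a) (use Part (1): for $i\in T$ the sum $\sum_{j\in T}d_{i,j}x_j$ collapses to $x_i$, so the displayed equation holds there too), and (c)$\Rightarrow$(a) (evaluate the identity $\pi_i=\sum_{j\in T}d_{i,j}\pi_j$ at $\vec x$) are all routine.

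The one step with content is (a)$\Rightarrow$(c). I would first prove the auxiliary identity $\sum_{l<v}b_{k,l}d_{l,j}=0$ for all $k<u$ and $j\in T$: substituting $\pi_l=\sum_{m\in T}d_{l,m}\pi_m$ into $0=\sum_{l<v}b_{k,l}\pi_l$ and rearranging the (finite) double sum yields $0=\sum_{m\in T}\bigl(\sum_{l<v}b_{k,l}d_{l,m}\bigr)\pi_m$, and independence of $\{\pi_m:m\in T\}$ kills each coefficient. Granting (a), we then compute $(B\vec x)_k=\sum_{l<v}b_{k,l}x_l=\sum_{l<v}b_{k,l}\sum_{j\in T}d_{l,j}x_j=\sum_{j\in T}\bigl(\sum_{l<v}b_{k,l}d_{l,j}\bigr)x_j=0$, so $\vec x\in K$. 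The only thing needing care is the legitimacy of these rearrangements in the possibly infinite setting, and this is exactly where the standing hypothesis that every row of $B$ has finitely many nonzero entries — together with the finiteness of the support of each $\langle d_{l,j}\rangle_{j\in T}$ — does all the work: for a fixed $k$ only finitely many pairs $(l,j)$ contribute, so every manipulation is really a finite computation. I do not anticipate a genuine obstacle here; the ``infinite'' character of the lemma never actually bites.
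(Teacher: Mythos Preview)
Your proof is correct and follows essentially the same approach as the paper: both arguments hinge on the identity $\sum_{l<v}b_{k,l}\pi_l=0$ in $V$, derive from it (via linear independence of $\{\pi_j:j\in T\}$) the auxiliary relation $\sum_{l<v}b_{k,l}d_{l,j}=0$, and then use this to carry out the double-sum computation showing $(B\vec x)_k=0$. Your explicit attention to why the rearrangements are legitimate in the infinite case is a welcome addition, but the logical skeleton is the same.
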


\begin{proof}  Conclusion (1) is immediate.

For the second conclusion, let $k<u$ and suppose that
for all $l\in v\setminus T$, $b_{k,l}=0$. Then for
each $\vec x\in K$, $0=\sum_{j<v}b_{k,j}x_j=\sum_{j\in T}b_{k,j}x_j
=\sum_{j\in T}b_{k,j}\pi_j(\vec x)$, so
$\overline 0=\sum_{j\in T}b_{k,j}\pi_j$, where $\overline 0$ is the constant
linear transformation.  But  we are assuming that no row of $B$ consists entirely of zeroes. 
So for some $j\in T$, $b_{k,j}\neq 0$ and thus
$\{\pi_i:i\in T\}$ is not linearly independent, a contradiction.

For the third conclusion, let $\vec x\in \beq^v$. The fact that (c) implies (a) is trivial
as is the fact that (a) implies (b).
So assume that for all $i\in v\setminus T$, $x_i=\sum_{j\in T}d_{i,j}x_j$.
By conclusion (1) we have that for all $i<v$, $x_i=\sum_{j\in T}d_{i,j}x_j$.

For any $k<u$ 
$$\textstyle \overline 0=\sum_{i<v}b_{k,i}\pi_i=\sum_{i<v}b_{k,i}\sum_{j\in T}d_{i,j}\pi_j
=\sum_{j\in T}(\sum_{i<v}b_{k,i}d_{i,j})\pi_j\,.$$ Since $\{\pi_i:i\in T\}$ is 
linearly independent, we have that for each $k<u$ and each $j\in T$, $\sum_{i<v}b_{k,i}d_{i,j}=0$.
Therefore $$\textstyle\sum_{i<v}b_{k,i}x_i=\sum_{i<v}b_{k,i}\sum_{j\in T}d_{i,j}x_j=
\sum_{j\in T}(\sum_{i<v}b_{k,i}d_{i,j})x_j=0\,.$$
\end{proof}

\begin{lemma}\label{MatrixC}  Let $u,v\in\ben\cup\{\omega\}$ and let 
$B$ be a $u\times v$ matrix with rational entries. There exists a  
$v\times v$ matrix $C$ with rational entries such that the following statements hold. 
\begin{itemize} 
\item[(1)] For every $\vec x\in \beq^v$, $B\vec x=\vec 0$ if and only if
$C\vec x=\vec x$. 
\item[(2)] $BC$ is the $u\times v$ matrix with all entries equal to $0$.
\item[(3)] $K(B)=R(C)$.
\item[(4)] $C^2=C$. 
\item[(5)] If $S$ is a nontrivial subsemigroup of $\beq$, then the following
statements are equivalent.
\begin{itemize}
\item[(a)] $B$ is kernel partition regular over $S$.
\item[(b)] $C$ is image partition regular over $S$.
\item[(c)] $C$ is weakly image partition regular over $S$.
\end{itemize}\end{itemize}
\end{lemma}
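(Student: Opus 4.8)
The plan is to construct $C$ explicitly using the matrix $(d_{i,j})$ from Lemma~\ref{xinkernel}, then verify the five conclusions in a natural order, obtaining (5) almost for free at the end. If $K(B)$ is trivial, take $C$ to be the zero matrix: then $B\vec x=\vec 0\iff\vec x=\vec 0\iff C\vec x=\vec x$ (the last because $C\vec x=\vec 0$ always and only $\vec 0$ is a fixed point), $BC=0$, $K(B)=\{\vec 0\}=R(C)$, $C^2=C$, and in (5) no matrix with trivial kernel is kernel partition regular over $S$ (a monochromatic kernel vector would have to be $\vec 0$, but we colour $S\setminus\{0\}$), while the zero matrix is not image partition regular either, so all three conditions fail. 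So assume $K=K(B)$ is nontrivial and pick $T\subseteq v$ and the scalars $d_{i,j}$ as in Lemma~\ref{xinkernel}. Define $C$ by $c_{i,j}=d_{i,j}$ if $j\in T$ and $c_{i,j}=0$ if $j\notin T$; equivalently, $(C\vec x)_i=\sum_{j\in T}d_{i,j}x_j$. First I would check that $C$ has finitely many nonzero entries per row: row $i$ of $C$ is supported on $\{j\in T: d_{i,j}\neq 0\}$, and since $\pi_i=\sum_{j\in T}d_{i,j}\pi_j$ expresses $\pi_i$ as a \emph{finite} linear combination (linear independence in the algebraic sense), this support is finite.

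Next, conclusion (1): by Lemma~\ref{xinkernel}(3), $\vec x\in K$ iff for all $i<v$, $x_i=\sum_{j\in T}d_{i,j}x_j$, which is exactly the statement $C\vec x=\vec x$. For conclusion (4), $C^2=C$: compute $(C^2\vec x)_i=\sum_{k\in T}d_{i,k}(C\vec x)_k=\sum_{k\in T}d_{i,k}\sum_{j\in T}d_{k,j}x_j$, and by Lemma~\ref{xinkernel}(1) the inner matrix $(d_{k,j})_{k,j\in T}$ is the identity, so this collapses to $\sum_{j\in T}d_{i,j}x_j=(C\vec x)_i$. (One should note $R(C)\subseteq K$: for any $\vec x$, $C\vec x$ satisfies $C(C\vec x)=C^2\vec x=C\vec x$, so $C\vec x\in K$ by (1); conversely $\vec x\in K\Rightarrow \vec x=C\vec x\in R(C)$, giving (3) $K(B)=R(C)$.) Conclusion (2), $BC=0$: for any $\vec x$, $BC\vec x=B(C\vec x)=\vec 0$ since $C\vec x\in R(C)=K(B)$.

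Finally, conclusion (5). Suppose $S$ is a nontrivial subsemigroup of $\beq$ and fix a finite colouring of $S\setminus\{0\}$. If $B$ is kernel partition regular over $S$, there is $\vec x\in(S\setminus\{0\})^v$ with $B\vec x=\vec 0$ and monochromatic entries; by (1), $C\vec x=\vec x$, so the entries of $C\vec x$ are the (monochromatic, nonzero, in $S$) entries of $\vec x$, witnessing that $C$ is image partition regular over $S$ — so (a) $\Rightarrow$ (b). Trivially (b) $\Rightarrow$ (c), since $S\subseteq G$ forces an image vector $\vec x\in(S\setminus\{0\})^v$ to lie in $G^v$. For (c) $\Rightarrow$ (a): if $C$ is weakly image partition regular over $S$, there is $\vec x\in G^v$ with the entries of $C\vec x$ monochromatic (hence in $S\setminus\{0\}$, since we colour $S\setminus\{0\}$). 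Set $\vec y=C\vec x$. By $C^2=C$ we get $C\vec y=C^2\vec x=C\vec x=\vec y$, so by (1) $B\vec y=\vec 0$; and $\vec y\in(S\setminus\{0\})^v$ with monochromatic entries, so $B$ is kernel partition regular over $S$. The main point to be careful about — the only place that is more than bookkeeping — is keeping the algebraic (finite-support) meaning of "linear independence" and "linear combination" straight throughout, so that $C$ genuinely has finitely many nonzero entries per row and the index manipulations with $\sum_{j\in T}$ are legitimate even when $v=\omega$; everything else is a direct consequence of the two identities $C\vec x=\vec x\iff B\vec x=\vec 0$ and $C^2=C$.
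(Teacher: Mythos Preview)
Your proof is correct and follows essentially the same approach as the paper: the same construction of $C$ from the data $T$ and $(d_{i,j})$ of Lemma~\ref{xinkernel}, and the same handling of the trivial-kernel case and of conclusion~(5). The only difference is the order in which you verify (2)--(4): the paper first shows $BC={\bf O}$ by checking directly that each column of $C$ lies in $K(B)$, then deduces (3) and (4) from that; you instead prove $C^2=C$ first via the identity $(d_{k,j})_{k,j\in T}=I$ from Lemma~\ref{xinkernel}(1), then get (3) and (2) as consequences---a slightly cleaner route. Your explicit remark that each row of $C$ has finite support (because $\pi_i$ is a \emph{finite} combination of the $\pi_j$'s) is a point the paper leaves implicit.
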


\begin{proof} If the kernel of $B$ is trivial, our lemma holds with $C={\bf O}$, the $v\times v$ matrix with
all entries equal to $0$. So we may suppose that the kernel of $B$ is nontrivial, and hence 
that the hypotheses of Lemma \ref{xinkernel} are satisfied. We define $T$ and $d_{i,j}$ for $i<v$ and $j\in T$ as in 
Lemma \ref{xinkernel}.   Let $C$ be the $v\times v$ matrix such that, for
$i,j<v$, $c_{i,j}=\left\{\begin{array}{cl}d_{i,j}&\hbox{if }j\in T\\
0&\hbox{if }j\notin T\,. \end{array}\right.$
It follows from Lemma \ref{xinkernel}(3) that for each $\vec x\in\beq^v$, $B\vec x=\vec 0$ if and only if $C\vec x=\vec x$. 

Let ${\bf O}$ be the $u\times v$ matrix all of whose entries are $0$.  To see that
$BC={\bf O}$ we need that each column of $C$ is in $K(B)$. If $t\in v\setminus T$,
it is trivial that column $t$ of $C$ is in $K(B)$, so let $t\in T$ and for
$j<v$, let $x_j=c_{j,t}$.  By Lemma \ref{xinkernel}(3), we need to show that
for each $i<v$, $x_i=\sum_{j\in T}c_{i,j}x_j$. So let
$i<v$ be given. Then $\sum_{j\in T}c_{i,j}x_j=\sum_{j\in T}c_{i,j}c_{j,t}=c_{i,t}=x_i$.

Since $BC={\bf O}$, we have $R(C)\subseteq K(B)$. And if $\vec x\in K(B)$, then
$C\vec x=\vec x$ so $\vec x\in R(C)$.  To see that $C^2=C$, let $\vec y\in\beq^v$ and
let $\vec x=C\vec y$.  Then $B\vec x=BC\vec y=\vec 0$ so $C\vec x=\vec x$.

Finally, assume that $S$ is a nontrivial subsemigroup of $\beq$. That (a) implies (b) follows
from conclusion (1) and the fact that (b) implies (c) is trivial. To see that (c) implies (a),
let $S\setminus\{0\}$ be finitely colored and pick $\vec y\in \beq^v$ such that
the entries of $\vec x=C\vec y$ are monochromatic.  By conclusion (4) $C\vec x=\vec x$ so
by conclusion (1), $B\vec x=\vec 0$.
\end{proof}

The first part of our duality is valid for arbitrary nontrivial subsemigroups of $\beq$.
Notice that, as a consequence of this theorem, if $S\setminus\{0\}$ is finitely 
coloured, then the sets of monochromatic kernel elements of $B$ and monochromatic images
of $C$ are equal.

\begin{theorem}\label{BgetsC} Let $u,v\in\ben\cup\{\omega\}$, let $S$ be a nontrivial subsemigroup of $\beq$, and let 
$B$ be a $u\times v$ matrix with rational entries. Then there is a $v\times v$ matrix $C$ with
rational entries such that
$$\{\vec x\in (S\setminus\{0\})^v:B\vec x=\vec 0\}=\{C\vec y:\vec y\in (S\setminus\{0\})^v\}\cap(S\setminus\{0\})^v\,.$$
Further $B$ is kernel partition regular over $S$ if and only if $C$ is image partition regular over $S$.
\end{theorem}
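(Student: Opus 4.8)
The plan is to obtain the matrix $C$ directly from Lemma~\ref{MatrixC}, and then to observe that almost everything has already been proved there. Concretely, given $B$, let $C$ be the $v\times v$ matrix provided by Lemma~\ref{MatrixC}. The final sentence of the theorem -- that $B$ is kernel partition regular over $S$ if and only if $C$ is image partition regular over $S$ -- is literally the equivalence (a)$\Leftrightarrow$(b) of Lemma~\ref{MatrixC}(5), so nothing further is needed for that part.

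It remains to verify the displayed set equality. First I would unwind the right-hand side: the factor $\{C\vec y:\vec y\in(S\setminus\{0\})^v\}\cap(S\setminus\{0\})^v$ is just the set of vectors $\vec x$ that lie in $R(C)$ and have all entries in $S\setminus\{0\}$. For the inclusion $\subseteq$, take $\vec x\in(S\setminus\{0\})^v$ with $B\vec x=\vec 0$; by Lemma~\ref{MatrixC}(1) we have $C\vec x=\vec x$, so $\vec x$ is itself an image $C\vec y$ with $\vec y=\vec x\in(S\setminus\{0\})^v$, and of course $\vec x\in(S\setminus\{0\})^v$, so $\vec x$ is in the right-hand side. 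For the inclusion $\supseteq$, take $\vec x=C\vec y$ with $\vec y\in(S\setminus\{0\})^v$ and with $\vec x\in(S\setminus\{0\})^v$. By Lemma~\ref{MatrixC}(4), $C^2=C$, so $C\vec x=C(C\vec y)=C\vec y=\vec x$, and then Lemma~\ref{MatrixC}(1) gives $B\vec x=\vec 0$; since also $\vec x\in(S\setminus\{0\})^v$, it lies in the left-hand side. This establishes the equality.

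There is no real obstacle here: the theorem is essentially a repackaging of Lemma~\ref{MatrixC}, and the only point requiring a moment's care is that the right-hand side of the display, as written, allows the preimage $\vec y$ to be an arbitrary element of $(S\setminus\{0\})^v$ rather than forcing $\vec y=\vec x$; the identity $C^2=C$ is exactly what reconciles this, since it shows that any image of $C$ is a fixed point of $C$ and hence (by part (1)) lies in $K(B)$. I would write the proof in essentially the three sentences above: invoke Lemma~\ref{MatrixC} to produce $C$, check the two inclusions using parts (1) and (4), and cite part (5) for the partition-regularity statement.
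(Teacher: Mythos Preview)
Your proof is correct and essentially identical to the paper's: both invoke Lemma~\ref{MatrixC} to produce $C$, verify $\subseteq$ via $C\vec x=\vec x$ for $\vec x\in K(B)$, and verify $\supseteq$ by showing any image of $C$ lies in $K(B)$. The only cosmetic difference is that for the reverse inclusion the paper appeals directly to $BC={\bf O}$ (part~(2)) to get $B\vec z=\vec 0$, whereas you go through $C^2=C$ and part~(1); both routes are equally valid.
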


\begin{proof} 
 Let  $C$ be as in Lemma \ref{MatrixC}. For every $x\in \beq^v$, $B\vec x=\vec 0$ implies 
$C\vec x=\vec x$, and so 
$\{\vec x\in (S\setminus\{0\})^v:B\vec x=\vec 0\}\subseteq\{C\vec y:y\in (S\setminus\{0\})^v\}\cap(S\setminus\{0\})^v$.
For the reverse inclusion, let $\vec z\in (S\setminus\{0\})^v$ and assume that $\vec z=C\vec y$ for some $y\in S^v$.  
Then $B\vec z=\vec 0$. The final assertion of the theorem is immediate. \end{proof}

If $v$ is finite, the following lemma is a standard result in linear
algebra. This lemma is \cite[Lemma 3.5]{HSa}, but the proof given here is much
easier. In this lemma, $\bigoplus_{i<v}\beq$ is the direct sum of $v$ copies
of $\beq$, that is members of the Cartesian product $\bigtimes_{i<v}\beq$
with finitely many nonzero entries.

\begin{lemma}\label{span} Let $v\in\ben\cup\{\omega\}$, let $W=\bigoplus_{i<v}\beq$,
let $L\subseteq v$, let $\langle \vec r_i\rangle_{i\in L}$ be a sequence
of linearly independent members of $W$, and let $\langle y_i\rangle_{i\in L}$
be an arbitrary sequence in $\beq$.  Then there exists $\vec x\in\beq^v$ such that
for all $i\in L$, $\vec r_i\cdot\vec x=y_i$.\end{lemma}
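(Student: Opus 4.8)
The plan is to recast the statement as one about linear functionals on $W$. Recall that $W=\bigoplus_{i<v}\beq$ has the standard basis $\langle e_k\rangle_{k<v}$, where $e_k$ has a $1$ in coordinate $k$ and $0$ elsewhere, and that for $\vec r\in W$ and $\vec x\in\beq^v$ the expression $\vec r\cdot\vec x=\sum_{k<v}r_k x_k$ is a well-defined finite sum since $\vec r$ has only finitely many nonzero entries. Thus each $\vec x\in\beq^v$ determines a linear functional $\varphi_{\vec x}\colon W\to\beq$ by $\varphi_{\vec x}(\vec r)=\vec r\cdot\vec x$, and conversely, given any linear functional $\varphi\colon W\to\beq$, the vector $\vec x\in\beq^v$ with $x_k=\varphi(e_k)$ satisfies $\varphi=\varphi_{\vec x}$, because the two functionals agree on every basis vector $e_k$ and hence on all of $W$. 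So it suffices to produce a linear functional on $W$ taking the value $y_i$ at $\vec r_i$ for each $i\in L$.

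First I would use the linear independence of $\langle\vec r_i\rangle_{i\in L}$ to extend this family to a basis $\langle\vec r_i\rangle_{i\in L}\cup\langle\vec s_j\rangle_{j\in J}$ of $W$; this is routine when $v$ is finite, and in general it follows by a maximality argument of the same kind used in the proof of Lemma \ref{xinkernel}. Next I would define a linear functional $\varphi\colon W\to\beq$ by prescribing its values on this basis, namely $\varphi(\vec r_i)=y_i$ for $i\in L$ and $\varphi(\vec s_j)=0$ for $j\in J$, and extending linearly. Finally I would set $x_k=\varphi(e_k)$ for $k<v$, obtaining $\vec x\in\beq^v$; then for each $i\in L$, writing $\vec r_i=\sum_{k\in F}(\vec r_i)_k e_k$ with $F$ a finite subset of $v$, one gets $\vec r_i\cdot\vec x=\sum_{k\in F}(\vec r_i)_k\varphi(e_k)=\varphi(\vec r_i)=y_i$, as required.

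The only point that genuinely requires care is the passage from a prescribed functional on $W$ back to a vector in $\beq^v=\bigtimes_{i<v}\beq$: this works precisely because every member of $W$, in particular each $\vec r_i$, has finite support, so that the pairing $\vec r\cdot\vec x$ converges for arbitrary $\vec x\in\beq^v$ and is completely determined by the values $\varphi(e_k)$. An alternative that sidesteps choosing a basis of all of $W$ would be to row-reduce the (countable, finitely supported) family $\langle\vec r_i\rangle_{i\in L}$ to an echelon form and then solve the resulting triangular system coordinate by coordinate, but the basis-extension argument is cleaner and makes transparent why the claim holds over any field, as the remark after the lemma statement anticipates.
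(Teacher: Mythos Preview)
Your argument is correct and follows essentially the same route as the paper: extend $\langle\vec r_i\rangle_{i\in L}$ to a basis of $W$, define a linear functional on that basis taking the prescribed values, and recover $\vec x$ via $x_k=\varphi(e_k)$. The paper's only additional wrinkle is that it insists on indexing the extended basis by a subset $J$ of $v$ containing $L$, which forces a preliminary reduction (when $v=\omega$) to the case where $v\setminus L$ is infinite; your choice of an arbitrary index set for the added vectors sidesteps this and is slightly cleaner.
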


\begin{proof} If $v=\omega$, we may presume that $v\setminus L$ is infinite
because then $\bigoplus_{i<v}\beq$ is isomorphic to $\bigoplus_{i<v}\beq\oplus\bigoplus_{i<v}\beq$.
For $i<v$, let $\vec e_i\in W$ be the usual basis vector
defined by 
$$\vec e_i(j)=\left\{\begin{array}{cl}1&\hbox{if }j=i\\
0&\hbox{if }j\neq i\,.\end{array}\right.$$
If $\langle \vec r_i\rangle_{i\in L}$ spans $W$, let $J=L$. Otherwise,
pick $J\subseteq v$ such that $L\subseteq J$ and
pick $\langle \vec r_i\rangle_{i\in J\setminus L}$ such that
$\langle \vec r_i\rangle_{i\in J}$ is a basis for $W$. (If $v=\omega$, this is possible
because $v\setminus L$ is infinite.)

Define a linear transformation $f:W\to\beq$ by, for $i\in J$,
$$f(\vec r_i)=\left\{\begin{array}{cl}y_i&\hbox{if }i\in L\\
0&\hbox{if }i\in J\setminus L\,.\end{array}\right.$$
For each $i\in v$ let $x_i=f(\vec e_i)$. Define a linear
transformation $g:W\to\beq$ by, for $\vec z\in W$,
$g(\vec z)=\vec z\cdot\vec x$.  Then for each
$i<v$, $g(\vec e_i)=\vec e_i\cdot \vec x=x_i=f(\vec e_i)$.
Since $f$ and $g$ agree on a basis for $W$, they are equal.
Therefore, for $i\in L$, $y_i=f(\vec r_i)=g(\vec r_i)=\vec r_i\cdot\vec x$.
\end{proof}

For finite matrices, the matrix $B$ in the following lemma was introduced in \cite[Theorem 2.2]{HL}.

\begin{lemma}\label{matrixB}  Let $u,v\in\ben\cup\{\omega\}$ and let 
$A$ be a $u\times v$ matrix with rational entries such that the rows of 
$A$ are linearly dependent over $\beq$. For $i<u$, let $\vec r_i$ be the $i^{\hbox{\smallrm th}}$
row of $A$.  Let $L$ be a subset of $u$ that is maximal with respect to the property
that $\{\vec r_i:i\in L\}$ is linearly independent without repeated rows and let 
$J=u\setminus L$. For $i\in J$, let $\langle b_{i,t}\rangle_{t\in L}$ be the
rational numbers such that $\vec r_i=\sum_{t\in L}b_{i,t}\vec r_t$ and for
$i$ and $t$ in $J$, let $b_{i,t}=0$ if $i\neq t$ and $b_{i,t}=-1$ if $i=t$.
Let $B$ be the $J\times u$ matrix whose entry in row $i$ and column $t$ is
$b_{i,t}$. Then $BA={\bf O}$, where ${\bf O}$ is the $J\times v$ matrix with all
zero entries. Furthermore, $K(B)=R(A)$.
\end{lemma}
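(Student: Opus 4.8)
The plan is to verify the two assertions $BA = {\bf O}$ and $K(B) = R(A)$ more or less directly from the construction. First I would check $BA = {\bf O}$ by computing, for each $i \in J$ and each $j < v$, the $(i,j)$ entry of $BA$, which is $\sum_{t \in u} b_{i,t} a_{t,j}$. Splitting the sum over $t \in L$ and $t \in J$, and using that $b_{i,t} = 0$ for $t \in J$ with $t \neq i$ while $b_{i,i} = -1$, this equals $\bigl(\sum_{t \in L} b_{i,t} \vec r_t\bigr)_j - (\vec r_i)_j$, which is $0$ by the defining relation $\vec r_i = \sum_{t \in L} b_{i,t} \vec r_t$. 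One subtlety to note here: each row $\vec r_i$ lies in $\bigoplus_{i<v}\beq$ (it has finitely many nonzero entries), and the sum $\sum_{t \in L} b_{i,t} \vec r_t$ is a genuine finite linear combination because linear independence of $\{\vec r_t : t \in L\}$ inside a direct sum means each $\vec r_i$ is expressed using only finitely many of the $\vec r_t$; so all the manipulations are finite sums and the rearrangements are legitimate.

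Next, for $K(B) = R(A)$: the inclusion $R(A) \subseteq K(B)$ is immediate, since $BA = {\bf O}$ gives $B(A\vec x) = \vec 0$ for every $\vec x \in \beq^v$. For the reverse inclusion $K(B) \subseteq R(A)$, let $\vec w \in \beq^u$ with $B\vec w = \vec 0$. Looking at row $i \in J$ of the equation $B\vec w = \vec 0$ gives $\sum_{t \in L} b_{i,t} w_t - w_i = 0$, i.e. $w_i = \sum_{t \in L} b_{i,t} w_t$ for every $i \in J$. I want to produce $\vec x \in \beq^v$ with $A\vec x = \vec w$, i.e. $\vec r_i \cdot \vec x = w_i$ for all $i < u$. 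Since $\{\vec r_t : t \in L\}$ is linearly independent, Lemma \ref{span} (applied with the index set $L \subseteq u$ and the target values $\langle w_t \rangle_{t \in L}$) furnishes $\vec x \in \beq^v$ with $\vec r_t \cdot \vec x = w_t$ for all $t \in L$. It remains to check the rows indexed by $J$: for $i \in J$ we have $\vec r_i \cdot \vec x = \bigl(\sum_{t \in L} b_{i,t} \vec r_t\bigr) \cdot \vec x = \sum_{t \in L} b_{i,t} (\vec r_t \cdot \vec x) = \sum_{t \in L} b_{i,t} w_t = w_i$, where the last equality is exactly the relation extracted from $B\vec w = \vec 0$. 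Hence $A\vec x = \vec w$ and $\vec w \in R(A)$.

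The main obstacle is not really in the algebra, which is routine, but in being careful about the infinite case: one must ensure that $L$ is chosen so that $\{\vec r_t : t \in L\}$ is a linearly independent set with no repeated rows (so that the coefficients $b_{i,t}$ are well defined), that maximality of $L$ indeed guarantees every $\vec r_i$ is a finite linear combination of the $\vec r_t$ with $t \in L$, and that Lemma \ref{span} applies — which it does, since its hypothesis is precisely linear independence of the chosen family in $\bigoplus_{i<v}\beq$. No distinction between $u,v$ finite or equal to $\omega$ is needed beyond this bookkeeping, which is exactly the point the paper is making.
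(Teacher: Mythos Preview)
Your proof is correct and follows essentially the same route as the paper's: compute row $i$ of $BA$ as $-\vec r_i+\sum_{t\in L}b_{i,t}\vec r_t=\vec 0$, deduce $R(A)\subseteq K(B)$ from $BA={\bf O}$, and for $K(B)\subseteq R(A)$ invoke Lemma \ref{span} to solve $\vec r_t\cdot\vec x=w_t$ on $L$ and then propagate to $J$ via the kernel relations. Your extra remarks about finiteness of the linear combinations in the infinite case are accurate bookkeeping but not a different argument.
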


\begin{proof} For every $i\in J$, the $i^{\hbox{\smallrm th}}$ row of $BA$ is
$-\vec r_i+\sum_{t\in L}b_{i,t}\vec r_t=\vec 0$, and so $BA={\bf O}$.

Clearly, $R(A)\subseteq K(B)$.
To see that $K(B)\subseteq R(A)$, let $\vec y\in K(B)$. Then, for every $i\in J$, 
$y_i=\sum_{t\in L}b_{i,t}y_t$. By Lemma \ref{span}, there exists $\vec x\in \beq^v$
such that $\vec r_i\cdot \vec x=y_i$ for every $i\in L$. Hence, for every $i\in J$,
$\vec r_i\cdot \vec x=(\sum_{t\in L} b_{i,t}\vec r_t)\cdot\vec x=\sum_{t\in L}b_{i,t}y_t=y_i$ and so $\vec y=A\vec x\in R(A)$.
\end{proof}

Notice that if the rows of $A$ are linearly independent and $B$ is any matrix with
$u$ columns and all entries equal to $0$, then all entries of $BA$ are $0$ and 
$K(B)=R(A)$.

\begin{corollary}\label{stronglyIPR}  Let $u,v\in\ben\cup\{\omega\}$, let $S$ be a nontrivial subsemigroup of $\beq$, and let 
$A$ be a $u\times v$ matrix with rational entries that is weakly image partition regular over $S$. Then there is
a $u\times u$ matrix $C$ with rational entries that is image partition regular over $S$ for which $R(C)=R(A)$ and $C^2=C$.
\end{corollary}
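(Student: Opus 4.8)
The plan is to combine Lemma~\ref{matrixB} with Lemma~\ref{MatrixC} (equivalently Theorem~\ref{BgetsC}), using the already-proved equivalence between weak image partition regularity of $A$ and image partition regularity of the kernel-partition-regular matrix attached to it. First I would dispose of a degenerate case: if the rows of $A$ are linearly independent, then by the remark following Lemma~\ref{matrixB} we may take $B$ to be any $J\times u$ zero matrix with $K(B)=R(A)$; in fact it is cleaner here just to observe that when the rows of $A$ are independent we can take $J=\emp$ (or, harmlessly, proceed as below with $B$ the zero matrix on one row, whose kernel is all of $\beq^u$). The essential case is when the rows of $A$ are linearly dependent over $\beq$, so that Lemma~\ref{matrixB} applies directly.

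So assume the rows of $A$ are linearly dependent. Apply Lemma~\ref{matrixB} to obtain a $J\times u$ matrix $B$ with rational entries such that $BA = {\bf O}$ and $K(B) = R(A)$. Now apply Lemma~\ref{MatrixC} to $B$: since $B$ has $u$ columns, this produces a $u\times u$ matrix $C$ with rational entries satisfying $C^2 = C$, $K(B) = R(C)$, and — by part~(5) of that lemma — the equivalence of ``$B$ is kernel partition regular over $S$'', ``$C$ is image partition regular over $S$'', and ``$C$ is weakly image partition regular over $S$''. Stringing the two kernel identities together gives $R(C) = K(B) = R(A)$, which is one of the two required conclusions, and $C^2 = C$ is the other algebraic conclusion, already in hand.

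It remains to check that $C$ is image partition regular over $S$, and here is where the hypothesis that $A$ is \emph{weakly} image partition regular over $S$ is used. By Lemma~\ref{MatrixC}(5) it suffices to show $B$ is kernel partition regular over $S$. Let $S\setminus\{0\}$ be finitely coloured. Since $A$ is weakly image partition regular over $S$, there is $\vec x\in G^v$ (where $G$ is the group generated by $S$) with the entries of $A\vec x$ monochromatic, lying in $S\setminus\{0\}$. Set $\vec y = A\vec x \in R(A) = K(B)$; then $B\vec y = \vec 0$ and the entries of $\vec y$ are monochromatic members of $S\setminus\{0\}$, so $B$ is kernel partition regular over $S$. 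Hence $C$ is image partition regular over $S$, completing the proof.

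I do not expect any serious obstacle here: the corollary is essentially an assembly of Lemma~\ref{matrixB} and Lemma~\ref{MatrixC}. The only point requiring a little care is the linearly-independent-rows case, where Lemma~\ref{matrixB} as stated does not apply and one must either invoke the remark after it or simply note that an identity-like or zero $B$ still does the job; and the bookkeeping that the $C$ produced has the right dimensions ($u\times u$, since $B$ has $u$ columns). Everything else is a direct substitution into results already established in the excerpt.
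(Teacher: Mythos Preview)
Your proposal is correct and follows essentially the same route as the paper: apply Lemma~\ref{matrixB} to pass from $A$ to $B$ with $K(B)=R(A)$, then Lemma~\ref{MatrixC} to pass from $B$ to an idempotent $C$ with $R(C)=K(B)$, and use the weak image partition regularity of $A$ to verify that $B$ is kernel partition regular over $S$ (hence $C$ is image partition regular). The only cosmetic difference is in the degenerate case of independent rows: the paper simply takes $C$ to be the $u\times u$ identity matrix, whereas you route through a zero matrix $B$; both give the same $C$ up to this trivial choice.
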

\begin{proof} If the rows of $A$ are linearly independent, we may let $C$ be the 
$u\times u$ identity matrix, so assume that the rows of $A$ are linearly dependent.
Let $B$ be the matrix associated with $A$ by Lemma \ref{matrixB}, and let
$C$ be the matrix associated with $B$ by Lemma \ref{MatrixC}. Then $B$ is kernel partition regular over $S$
and $C$ is image partition regular over $S$. Since $R(A)=K(B)$ and $R(C)=K(B)$, it follows that
$R(C)=R(A)$.  
\end{proof}

The next theorem is the second half of our duality.

\begin{theorem}\label{AgetsB} 
  Let $u,v\in\ben\cup\{\omega\}$, let 
$A$ be a $u\times v$ matrix with rational entries.  Then there exist $J\subseteq u$ and a $J\times u$ matrix
$B$ such that  
$$\{\vec y\in (\beq\setminus\{0\})^u:B\vec y=\vec 0\}=
\{A\vec x:\vec x\in\beq^v\}\cap(\beq\setminus\{0\})^u\,.$$
Further, $A$ is weakly image partition regular over $\beq$ if and only if $B$ is kernel
partition regular over $\beq$.\end{theorem}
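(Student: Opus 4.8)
The plan is to reduce this to the machinery already developed. The key observation is that the right-hand side, $R(A)\cap(\beq\setminus\{0\})^u$, depends only on the subspace $R(A)\subseteq\beq^u$, and Lemma \ref{matrixB} produces, from $A$, a matrix $B$ whose kernel is exactly this subspace. So first I would handle the case where the rows of $A$ are linearly dependent: let $B$ be the $J\times u$ matrix associated to $A$ by Lemma \ref{matrixB}, where $J=u\setminus L$ for a maximal linearly independent set of rows. By that lemma $K(B)=R(A)$, hence immediately
$$\{\vec y\in(\beq\setminus\{0\})^u:B\vec y=\vec 0\}=K(B)\cap(\beq\setminus\{0\})^u=R(A)\cap(\beq\setminus\{0\})^u=\{A\vec x:\vec x\in\beq^v\}\cap(\beq\setminus\{0\})^u\,,$$
which is the displayed equation.

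Next I would dispose of the degenerate case where the rows of $A$ are linearly independent. Here there is no $B$ supplied by Lemma \ref{matrixB}, but the remark immediately following that lemma covers it: take $J=u$ (or any index set) and let $B$ be a matrix with $u$ columns all of whose entries are $0$, so that $K(B)=\beq^u=R(A)$ and the same displayed equation holds trivially. (One could alternatively absorb both cases by allowing $L$ to be all of $u$ in the statement, but it is cleaner to split.)

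Finally I would derive the partition regularity equivalence from the displayed set equality. Suppose $A$ is weakly image partition regular over $\beq$: given a finite colouring of $\beq\setminus\{0\}$, there is $\vec x\in\beq^v$ with the entries of $A\vec x$ monochromatic; in particular $A\vec x\in(\beq\setminus\{0\})^u$ (since a monochromatic vector over $\beq\setminus\{0\}$ has no zero entries), so $A\vec x$ lies in the right-hand side, hence in the left-hand side, giving a monochromatic $\vec y$ with $B\vec y=\vec 0$ — so $B$ is kernel partition regular over $\beq$. The converse is symmetric: a monochromatic kernel element of $B$ lies in the left-hand side, hence equals $A\vec x$ for some $\vec x\in\beq^v$, giving a monochromatic image of $A$. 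Thus the equivalence follows purely formally from the set equality.

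The only genuine content is Lemma \ref{matrixB}, which is already proved, so there is no real obstacle here; the mild annoyance is bookkeeping around the linearly-independent-rows case and checking that monochromaticity over $\beq\setminus\{0\}$ really does force nonzero entries (it does, by the parenthetical remark after Definition \ref{defiprkpr}). It is worth noting \emph{why} this half of the duality, unlike Theorem \ref{BgetsC}, is stated only over $\beq$ and not over arbitrary $S$: Lemma \ref{matrixB} relies on Lemma \ref{span}, which needs the full field structure to extend a linear functional defined on an independent set, and there is no reason the $B$ it produces should be kernel partition regular over a proper subsemigroup even when $A$ is weakly image partition regular over that subsemigroup — indeed the introduction promises a counterexample for proper subgroups.
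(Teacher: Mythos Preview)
Your proposal is correct and follows essentially the same route as the paper: split off the linearly-independent-rows case with a zero matrix $B$, then in the dependent case invoke Lemma \ref{matrixB} to get $K(B)=R(A)$, from which both the displayed equality and the partition-regularity equivalence follow. The paper's own proof is terser (it simply says ``the conclusions follow'' after $K(B)=R(A)$), but you have unpacked exactly what that phrase means.
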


\begin{proof} 
If the rows of $A$ are linearly independent, we may let $B$ be the $u\times u$ matrix with all
zero entries, so
assume that the rows of $A$ are linearly dependent.
Let $J$ and $B$ be as in Lemma \ref{matrixB}.  Since $K(B)=R(A)$, the conclusions follow.\end{proof}

In contrast with Theorem \ref{BgetsC}, we see that there is no nontrivial proper
subgroup of $\beq^+=\{x\in\beq:x>0\}$ to which Theorem \ref{AgetsB} extends, even if the
hypothesis is strengthened to require $A$ to be image partition regular, not
just weakly image partition regular.   

\begin{theorem}\label{notG} Let $S$ be a nontrivial proper subsemigroup of
$\beq^+$ or a nontrivial proper subgroup of $\beq$.  Then there is a $3\times 2$ matrix $A$ that is image partition regular
over $S$ (whose rows are necessarily linearly dependent) and has the property that 
there do not exist $k\in\ben$ and a $k\times 3$ matrix $B$
such that 
$$\{\vec y\in (S\setminus\{0\})^3:B\vec y=\vec 0\}=
\{A\vec x:\vec x\in S^2\}\cap(S\setminus\{0\})^3\,.$$
In fact, there do not exist $k\in\ben$ and a $k\times 3$ matrix $B$
such that 
$$\begin{array}{rcl}\{A\vec x:\vec x\in (S\setminus\{0\})^2\}\cap(S\setminus\{0\})^3&\subseteq
&\{\vec y\in (S\setminus\{0\})^3:B\vec y=\vec 0\}\\&\subseteq&
\{A\vec x:\vec x\in S^2\}\cap(S\setminus\{0\})^3\,.\end{array}$$ \end{theorem}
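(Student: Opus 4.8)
The plan is to exhibit a single small matrix $A$ that works for every such $S$, and then rule out the existence of $B$ by a counting argument applied to a carefully chosen small region of the image. Since $S$ is a proper subsemigroup of $\beq^+$ or a proper subgroup of $\beq$, I first observe that in either case there is a rational $q>0$ with $q\notin S$ but with ``enough'' of $S$ still present near $q$; more precisely, using that $S$ is nontrivial I can fix $s\in S\setminus\{0\}$ and note that multiplication by a suitable positive rational carries $S$ into $S$ (in the subgroup case $S$ is a $\beq$-subspace-like object closed under rational scaling by denominators dividing... — here I would instead use the cleaner fact that $S$ proper means some $\beq$-multiple of $s$ is outside $S$). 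The matrix to use is
$$A=\left(\begin{array}{cc}1&0\\0&1\\1&1\end{array}\right),$$
whose image is $\{(x,y,x+y):x,y\in S\}$; this $A$ is image partition regular over any such $S$ because Schur's theorem holds over $S$ (a nontrivial subsemigroup of $\beq$ contains an isomorphic copy of $\ben$ after scaling, so finite colourings yield monochromatic $x,y,x+y$), and its rows are visibly linearly dependent since row $3$ is row $1$ plus row $2$.

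Next I set up the obstruction. Suppose for contradiction that $k\in\ben$ and a $k\times 3$ matrix $B$ exist with
$$\{A\vec x:\vec x\in(S\setminus\{0\})^2\}\cap(S\setminus\{0\})^3\subseteq\{\vec y\in(S\setminus\{0\})^3:B\vec y=\vec 0\}\subseteq\{A\vec x:\vec x\in S^2\}\cap(S\setminus\{0\})^3.$$
The left-hand set is exactly $\{(x,y,x+y):x,y\in S\setminus\{0\},\ x+y\neq 0\}$, which (in the $\beq^+$ case all of these are automatically nonzero) is a ``large'' two-parameter family lying in $K(B)\cap(S\setminus\{0\})^3$. I would argue that this forces $K(B)$ to be exactly the plane $P=\{(x,y,z):z=x+y\}$: indeed, $K(B)$ is a $\beq$-subspace of $\beq^3$ containing three affinely independent points of $P$ of the form $(x,y,x+y)$ (pick $(s,s,2s)$, $(s,2s,3s)$, $(2s,s,3s)$), hence $K(B)\supseteq P$; and if $K(B)$ were larger it would be all of $\beq^3$, making $B=\mathbf{O}$, so that $\{\vec y\in(S\setminus\{0\})^3:B\vec y=\vec 0\}=(S\setminus\{0\})^3$, which cannot be contained in $\{A\vec x:\vec x\in S^2\}=\{(x,y,x+y):x,y\in S\}$ — take any triple in $(S\setminus\{0\})^3$ whose third coordinate is not the sum of the first two (such triples exist since $|S|\ge 2$ gives at least two distinct nonzero values $s,2s$, and $(s,s,s)$ is the needed counterexample). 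So $K(B)=P$, and therefore $\{\vec y\in(S\setminus\{0\})^3:B\vec y=\vec 0\}=\{(x,y,x+y):x,y\in S\setminus\{0\},\ x+y\neq 0\}$.

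The contradiction now comes from the right-hand inclusion combined with properness. We have shown the middle set equals $\{(x,y,x+y):x,y\in S\setminus\{0\}\}$ (dropping the harmless $x+y\neq 0$ side condition appropriately), and the right-hand set is $\{(x,y,x+y):x,y\in S\}\cap(S\setminus\{0\})^3$, i.e. triples $(x,y,x+y)$ with $x,y\in S$ and all three of $x,y,x+y$ nonzero. These two are actually equal, so the inclusion is no contradiction yet — the real point must be extracted differently: I should instead choose $A$ so that the middle forced kernel contains a point with a zero that the outer set forbids, or exploit that properness of $S$ lets me pick $x,y\in S\setminus\{0\}$ with $x+y\notin S$. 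That last is the key: since $S$ is a \emph{proper} subsemigroup (not subgroup-closed... but here $S$ is a semigroup under $+$, so $x,y\in S\Rightarrow x+y\in S$!). Hence I must use subtraction, so the right matrix is
$$A=\left(\begin{array}{cc}1&0\\-1&1\\0&1\end{array}\right),\quad \text{image }\{(x,\,y-x,\,y):x,y\in S\},$$
image partition regular over $S$ via van der Waerden/Schur-type arguments; now $y-x$ ranges outside $S$ for suitable $x,y\in S$ with $y-x\notin S$ (properness), and the argument: $K(B)$ is forced to be the plane $\{z_0+z_1-z_2... \}$ by the left inclusion restricted to those $x,y$ with all coordinates in $S\setminus\{0\}$, but then a point $(x,y-x,y)$ with $x,y\in S\setminus\{0\}$, $y-x\in S\setminus\{0\}$ lying in $K(B)$ forces, via the right inclusion, nothing new — whereas the left inclusion already populates $K(B)$ densely enough on $S$ that $K(B)$ must contain $(x', y'-x', y')$ with $y'-x'\notin S$ too, and that triple has a coordinate outside $S$, contradicting $K(B)\cap(S\setminus\{0\})^3\subseteq\{A\vec x:\vec x\in S^2\}$ only if... — the clean finish is: $K(B)$ being a plane over $\beq$, it contains $(t\cdot p + (1-t)\cdot q)$ for all rational $t$ and $p,q$ in it; choosing $p,q$ from the left-hand family and a rational $t$ so that the middle coordinate leaves $S$ while the outer coordinates stay in $S\setminus\{0\}$ (possible since $S\subsetneq\beq$ or $S\subsetneq\beq^+$ — here one uses that $\beq$-linear combinations of elements of $S$ reach outside $S$), gives a point in the middle set but not the right set.

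\textbf{Main obstacle.} The delicate point is showing the left-hand family is rich enough inside $(S\setminus\{0\})^3$ to force $K(B)$ to be the full expected plane \emph{and} that rational linear combinations then escape $S$ while keeping the surviving coordinates in $S\setminus\{0\}$; this requires the right choice of the $3\times 2$ matrix $A$ (one involving a negative entry, so that the image genuinely uses the group structure) together with a careful selection exploiting exactly how $S$ fails to be all of $\beq^+$ or $\beq$. I expect the bulk of the work to be this escape-from-$S$ construction, handling the $\beq^+$-subsemigroup and $\beq$-subgroup cases in parallel.
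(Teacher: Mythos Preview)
Your argument has a concrete gap: for both matrices you try, the choice $B=(1\ \ 1\ \ {-1})$ already satisfies the sandwich, so neither $A$ is a counterexample. With $A=\left(\begin{smallmatrix}1&0\\0&1\\1&1\end{smallmatrix}\right)$ you correctly compute that the left and right sets coincide with $P\cap(S\setminus\{0\})^3$ where $P=\{(a,b,a+b)\}$, and $K(B)=P$, so the sandwich holds with equality throughout. Exactly the same happens for your second matrix $A=\left(\begin{smallmatrix}1&0\\-1&1\\0&1\end{smallmatrix}\right)$: any $(a,b,c)\in P\cap(S\setminus\{0\})^3$ is $A\left(\begin{smallmatrix}a\\c\end{smallmatrix}\right)$ with $a,c\in S$, so again $B=(1\ \ 1\ \ {-1})$ works. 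The ``escape by rational combinations'' you sketch cannot succeed for these $A$, because on the image plane the inverse of $A$ reads off coordinates of the image vector itself; hence any point of the plane with all three coordinates in $S\setminus\{0\}$ automatically has its $A$-preimage in $S^2$. In short, your matrices are too tame: the restriction of $A$ to its image has an inverse with integer entries.

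The idea you are missing is that $A$ should depend on $S$ and carry a built-in scalar that is invertible in $\beq$ but not in $S$. The paper treats two cases. If $1\notin S$, set $d=\min(S\cap\ben)$ and take $A=\left(\begin{smallmatrix}d&0\\0&d\\d&d\end{smallmatrix}\right)$; if $1\in S$, pick $d\in\ben$ with $\tfrac{1}{d}\notin S$ and take $A=\left(\begin{smallmatrix}0&1\\d&1\\d&2\end{smallmatrix}\right)$. The contradiction then needs only a \emph{single} vector and a \emph{single} rescaling of it, not the full identification of $K(B)$ as a plane. For instance in the first case, $A\left(\begin{smallmatrix}d\\d\end{smallmatrix}\right)=\left(\begin{smallmatrix}d^2\\d^2\\2d^2\end{smallmatrix}\right)$ lies in the left-hand set, hence in $K(B)$; since $K(B)$ is a $\beq$-subspace, dividing by $d$ gives $\left(\begin{smallmatrix}d\\d\\2d\end{smallmatrix}\right)\in K(B)\cap(S\setminus\{0\})^3$, which must then lie in the right-hand set; but its unique $A$-preimage is $\left(\begin{smallmatrix}1\\1\end{smallmatrix}\right)\notin S^2$. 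The second case is analogous, producing a preimage with entry $\tfrac{1}{d}$. Your programme of finding one $A$ that works for every $S$ is thus the wrong framing; the obstruction is calibrated to how $S$ fails to be all of $\beq^+$ or $\beq$.
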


\begin{proof} If $S$ is a nontrivial proper subgroup of $\beq$, then
$\{x\in S:x>0\}$ is a nontrivial proper subsemigroup of $\beq^+$ so $\{x\in\beq:x>0\}\setminus S\neq\emp$.

Assume first that $1\notin S$ and let
$d=\min(S\cap \ben)$.  Let 
$A=\left(\begin{array}{cc} d&0\\ 0&d\\
d&d\end{array}\right)\,.$
To see that $A$ is image partition regular over $S$, let 
$\varphi$ be a finite colouring of $S$ and define a finite colouring $\psi$
of $\ben$ by, for $x\in\ben$, $\psi(x)=\varphi(d^2x)$.  Pick by Schur's 
Theorem $x_0$ and $x_1$ in $\ben$ such that $\psi(x_0)=\psi(x_1)=\psi(x_0+x_1)$.
Then $\vec x=\left(\begin{array}{c}dx_0\\ dx_1\end{array}\right)\in (S\setminus\{0\})^2$
and $A\vec x$ is monochromatic with respect to $\varphi$.

Suppose we have $k\in\ben$ and a $k\times 3$ matrix $B$
such that $$\begin{array}{rcl}\{A\vec x:\vec x\in (S\setminus\{0\})^2\}\cap(S\setminus\{0\})^3&\subseteq&
\{\vec y\in (S\setminus\{0\})^3:B\vec y=\vec 0\}\\&\subseteq&
\{A\vec x:\vec x\in S^2\}\cap(S\setminus\{0\})^3\,.\end{array}$$

Since $A\left(\begin{array}{c}d\\ d\end{array}\right)=
\left(\begin{array}{c}d^2\\ d^2\\ 2d^2\end{array}\right)$, we have
that $B\left(\begin{array}{c}d^2\\ d^2\\ 2d^2\end{array}\right)=\vec 0$
and therefore $B\left(\begin{array}{c}d\\ d\\ 2d\end{array}\right)=\vec 0$.
Consequently there is some $\vec x\in S^2$ such that
$A\vec x=\left(\begin{array}{c}d\\ d\\2d \end{array}\right)$.
But then $\vec x=\left(\begin{array}{c}1\\ 1\end{array}\right)\notin S^2$.

Now assume that $1\in S$ and pick some $d\in\ben$ such that $\frac{1}{d}\notin S$. Let
$A=\left(\begin{array}{cc} 0&1\\ d&1\\d&2\end{array}\right)\,.$
Since $A$ is a first entries matrix, by \cite[Theorem 3.1]{HL} $A$ is image partition regular over $\ben$ and therefore
over $S$.
Suppose we have $k\in\ben$ and a $k\times 3$ matrix $B$
such that $$\begin{array}{rcl}\{A\vec x:\vec x\in (S\setminus\{0\})^2\}\cap(S\setminus\{0\})^3&\subseteq&
\{\vec y\in (S\setminus\{0\})^3:B\vec y=\vec 0\}\\&\subseteq&
\{A\vec x:\vec x\in S^2\}\cap(S\setminus\{0\})^3\,.\end{array}$$

Since $A\left(\begin{array}{c}1\\ d\end{array}\right)=
\left(\begin{array}{c}d\\ 2d\\ 3d\end{array}\right)$, we have
that $B\left(\begin{array}{c}d\\ 2d\\ 3d\end{array}\right)=\vec 0$
and therefore $B\left(\begin{array}{c}1\\ 2\\ 3\end{array}\right)=\vec 0$.
Consequently there is some $\vec x\in S^2$ such that
$A\vec x=\left(\begin{array}{c}1\\ 2\\ 3\end{array}\right)$.
But then $\vec x=\left(\begin{array}{c}1/d\\ 1\end{array}\right)\notin S^2$.
\end{proof}  

It is well known, and was essentially a part of Deuber's proof of Rado's
conjecture, that if $u,v\in\ben$ and $A$ is a $u\times v$ matrix that is 
weakly image partition regular over $\bez$, then 
there is a matrix $D$ that is kernel partition
regular over $\bez$ such that if $\vec y$ has nonzero
entries and $D\vec y=\vec 0$, then the set of entries of $\vec y$ contains
the entries of $A\vec x$ for some $\vec x\in \bez^v\setminus\{\vec 0\}$.

We show in Theorem \ref{imgcontained} that the corresponding statement is
true for a finite matrix that is weakly image partition regular over any nontrivial
subgroup of $(\beq,+)$.

\begin{lemma}\label{lemDkpr} Let $S$ be a nontrivial subgroup of $\beq$, let $j,u\in\ben$, 
let $B$ be a $j\times u$ matrix with rational entries that is kernel partition regular 
over $\beq$, let ${\bf O}$ be the $j\times u$ matrix with all entries equal to $0$,
let $I$ be the $u\times u$ identity matrix, let $c\in S$, and let
$$D=\left(\begin{array}{ccc}B&{\bf O}&{\bf O}\\
I&cI&-cI\end{array}\right)\,.$$
Then $D$ is kernel partition regular over $S$.\end{lemma}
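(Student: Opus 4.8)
The plan is to reduce a finite colouring of $S\setminus\{0\}$ applied to $D$ to a finite colouring on which the kernel partition regularity of $B$ over $\beq$ can be invoked, while simultaneously arranging the bottom block of $D$ to be satisfiable with nonzero entries in $S$. Recall that $B$ kernel partition regular over $\beq$ means: whenever $\beq\setminus\{0\}$ is finitely coloured there is a monochromatic $\vec w\in(\beq\setminus\{0\})^u$ with $B\vec w=\vec 0$. The key observation is that a colouring of $S\setminus\{0\}$ can be pushed to a colouring of $\beq\setminus\{0\}$ via multiplication: since $S$ is a nontrivial subgroup of $\beq$, fix $s\in S\setminus\{0\}$; given a finite colouring $\varphi$ of $S\setminus\{0\}$, define a finite colouring $\psi$ of $\beq\setminus\{0\}$ by $\psi(q)=\varphi$ evaluated on a suitable $S$-translate or $S$-dilate of $q$. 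Actually, the cleaner route is: a vector $\vec x$ as required for $D$ will have the shape $\vec x=(\vec y,\vec z_1,\vec z_2)$ with $B\vec y=\vec 0$ and $\vec y+c\vec z_1-c\vec z_2=\vec 0$, i.e. $\vec z_2-\vec z_1=\tfrac1c\vec y$. So the bottom block is no constraint at all beyond requiring $\vec z_1,\vec z_2\in(S\setminus\{0\})^u$ with prescribed difference $\tfrac1c\vec y$; since $\tfrac1c\vec y$ need not lie in $S$, we cannot take $\vec z_1=\vec z_2=\vec y$, so the colouring must control $\vec z_1$ and $\vec z_2$ as well, and all of $\vec y,\vec z_1,\vec z_2$ must be monochromatic together.

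First I would set up the auxiliary colouring: given a finite colouring $\varphi$ of $S\setminus\{0\}$, I want to choose a ``scale'' so that a monochromatic kernel vector of $B$ over $\beq$, once scaled, lands in $S$ and, crucially, so that $\tfrac1c$ times it, together with two chosen $S$-translates, is also monochromatic. The standard trick: pick $N\in\ben$ with $Nc\in\ben$ (possible as $c\in\beq$), and for $q\in\beq\setminus\{0\}$ let $\psi(q)$ record the tuple of $\varphi$-colours of the finitely many quantities $mq$ for $m$ ranging over a fixed finite set of rationals $M$ (chosen to include $1$, $\tfrac1c$, and the translates needed below) — but only when all those $mq$ actually lie in $S\setminus\{0\}$, assigning a junk colour otherwise. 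Then apply kernel partition regularity of $B$ over $\beq$ to $\psi$ restricted to $\{q: \text{all } mq\in S\setminus\{0\}\}$, which is nonempty because it contains a whole coset of a finite-index subgroup. This yields $\vec w\in(\beq\setminus\{0\})^u$ with $B\vec w=\vec 0$, all $w_i$ in a common $\psi$-class. Scaling by an appropriate element of $S$ I get $\vec y\in(S\setminus\{0\})^u$ with $B\vec y=\vec 0$; and by the definition of $\psi$, the entries of $\tfrac1c\vec y$ lie in $S$ and I can pick a single $t\in S\setminus\{0\}$ (uniform over all coordinates, using that the $\psi$-colour encodes this) with $\vec z_1:=t\cdot\vec 1$ and $\vec z_2:=t\cdot\vec 1+\tfrac1c\vec y$ both in $(S\setminus\{0\})^u$ and all of $y_i, z_{1,i}, z_{2,i}$ sharing one $\varphi$-colour. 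Then $\vec x=(\vec y,\vec z_1,\vec z_2)$ satisfies $B\vec y=\vec 0$ and $\vec y+c\vec z_1-c\vec z_2=\vec y+ct\vec 1-ct\vec 1-\vec y=\vec 0$, so $D\vec x=\vec 0$, and $\vec x$ is monochromatic with all nonzero entries in $S$.

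The main obstacle is the bookkeeping to make ``monochromatic'' hold simultaneously for the three blocks: one must build the pigeonhole colouring $\psi$ so that a single $\varphi$-colour is forced on $\vec y$, on the constant vector $\vec z_1$, and on $\vec z_2=\vec z_1+\tfrac1c\vec y$, with the shift $t$ chosen coherently across coordinates. I expect to handle this by letting the colour of $q$ record the $\varphi$-colours of $q$, of $\tfrac1c q$, and of $\tfrac1c q + a$ for $a$ in a fixed finite ``test set'' $A_0\subseteq S$ whose dilates by $c$ we need; the finiteness of everything means $\psi$ is a finite colouring, and the van der Waerden/Schur-type content we need is entirely supplied by the hypothesis that $B$ is kernel partition regular over $\beq$. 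A secondary point to verify carefully is that the restricted domain on which $\psi$ is ``honest'' is itself large enough for partition regularity of $B$ — here I would use that it contains a coset $s+H$ or a dilate of a finite-index subgroup of $\beq$, and that kernel partition regularity of $B$ over $\beq$ passes to such cosets by an affine change of variables. Once these two points are pinned down, the verification $D\vec x=\vec 0$ is the one-line computation above.
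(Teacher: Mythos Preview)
Your proposal has a genuine gap at the core step. You need the entries of $\vec y$, of $\vec z_1$, and of $\vec z_2=\vec z_1+\tfrac1c\vec y$ to be simultaneously monochromatic for $\varphi$, with $B\vec y=\vec 0$. Encoding in $\psi(q)$ the $\varphi$-colours of $q$, $\tfrac1c q$, and $\tfrac1c q+a$ for $a$ in a fixed finite set $A_0$ does give you, from a $\psi$-monochromatic kernel vector $\vec w$ of $B$, that all the $y_i$ share one $\varphi$-colour, that all the $\tfrac1c y_i$ share one $\varphi$-colour, and that for each fixed $a$ all the $\tfrac1c y_i+a$ share one $\varphi$-colour. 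What it does \emph{not} give you is any reason these three colours coincide, nor any reason some $a\in A_0$ has $\varphi(a)$ equal to the common colour of the $y_i$. Your sentence ``the van der Waerden/Schur-type content we need is entirely supplied by the hypothesis that $B$ is kernel partition regular over $\beq$'' is exactly where the argument fails: kernel partition regularity of $B$ controls only the coordinates of $\vec y$, not the additive relationship between the three blocks, and that relationship is a genuinely new partition-regularity constraint beyond what $B$ carries. (Replacing the constant vector $t\cdot\vec 1$ by a scalar multiple $\alpha\vec y$ runs into the same obstruction: a product colouring forces $\varphi(y_i)$, $\varphi(\alpha y_i)$, $\varphi(\beta y_i)$ each to be constant in $i$, but not equal to one another.)

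A secondary problem: you invoke ``a coset of a finite-index subgroup of $\beq$'', but $\beq$ has no proper subgroups of finite index, so that part of the restricted-domain argument cannot work as stated.

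The paper's proof avoids all of this by going through Rado's theorem. Since $B$ is kernel partition regular over $\beq$, it satisfies the columns condition; one then checks directly that $D$ satisfies the columns condition (taking $I_0'=\{u,u+1,\ldots,3u-1\}$ so that the $cI$ and $-cI$ columns cancel, and then using the partition for $B$ on the first $u$ columns, with the bottom identity block handled via the $cI$, $-cI$ columns already available). Hence $D$ is kernel partition regular over $\bez$, and a single scaling by $d=\min(S\cap\ben)$ transfers this to $S$. The extra Ramsey content you were missing is precisely what Rado's theorem supplies.
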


\begin{proof} Since $B$ is kernel partition
regular over $\beq$, it satisfies the columns condition.  Let $m$ and a partition
$\langle I_t\rangle_{t=0}^{m-1}$ of $\ohat{u-1}$ be as guaranteed by the columns
condition for $B$.  

Let $I_0'=\{u,u+1,\ldots,3u-1\}$ and for $t\in\nhat{m}$, let $I_t'=I_{t-1}$.
Then $\langle I_t'\rangle_{t=0}^{m}$ is a partition of $\ohat{3u-1}$. It is
routine to verify that with this partition, $D$ satisfies the columns condition.
Therefore, $D$ is kernel partition regular over $\bez$.  Let $d=\min(S\cap \ben)$.
To see that $D$ is kernel partition regular over $S$, let $\varphi$ be a
finite colouring of $S\setminus\{0\}$.  Define $\psi$ on $\bez$ by
$\psi(x)=\varphi(dx)$. If $\vec x$ is monochromatic with respect to 
$\psi$ and $D\vec x=\vec 0$, then $d\vec x$ is monochromatic with respect
to $\varphi$ and $Dd\vec x=\vec 0$.
\end{proof}

\begin{theorem}\label{imgcontained} Let $S$ be a nontrivial proper subgroup of
$\beq$, let $u,v\in\ben$, and let $A$ be a $u\times v$  matrix with rational entries
that is weakly image partition regular over $S$ and has linearly dependent rows.
Then there exists $j<u$ and a $(j+u)\times 3u$ matrix $D$ that is kernel partition
regular over $S$ such that, whenever $\vec s\in (S\setminus\{0\})^{3u}$ and
$D\vec s=\vec 0$, there exists $\vec y\in S^v\setminus\{\vec 0\}$ such that
all entries of $A\vec y$ are entries of $\vec s$.
\end{theorem}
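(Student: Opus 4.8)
The plan is to combine Lemma~\ref{matrixB}, which converts the weak image partition regularity of $A$ into the kernel partition regularity of an auxiliary matrix $B$, with Lemma~\ref{lemDkpr}, which builds from $B$ a larger matrix $D$ that is kernel partition regular over the subgroup $S$ and whose kernel elements ``carry'' the kernel elements of $B$. The point of the extra blocks $\bigl(\begin{smallmatrix}I&cI&-cI\end{smallmatrix}\bigr)$ in $D$ is precisely that, given a monochromatic $\vec s$ with $D\vec s=\vec 0$, the first $u$ coordinates of $\vec s$ form a vector in $K(B)$, and $K(B)=R(A)$ by Lemma~\ref{matrixB}, so those coordinates are $A\vec y$ for some $\vec y\in\beq^v$; the job then is to arrange that in fact $\vec y$ can be taken in $S^v\setminus\{\vec 0\}$ and that its entries really do appear among the entries of $\vec s$.

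Concretely, I would proceed as follows. First, since the rows of $A$ are linearly dependent, apply Lemma~\ref{matrixB} to obtain $J\subseteq u$ (which is finite, so of the form $j$ for some $j<u$ after reindexing, using that $A$ is a genuinely finite matrix here) and a $j\times u$ matrix $B$ with $BA=\mathbf O$ and $K(B)=R(A)$. Because $A$ is weakly image partition regular over $S$, it is weakly image partition regular over $\beq$, and $R(A)=K(B)$, so by Theorem~\ref{AgetsB} (or directly) $B$ is kernel partition regular over $\beq$. Next, choose the scalar $c\in S$ appropriately — since $S$ is a subgroup of $\beq$ it contains some nonzero rational, and one wants $c$ large enough in denominator-clearing terms that multiplying the ``representative'' $\vec y$ by $c$ lands it in $S^v$; more precisely, $R(A)=K(B)$ is a $\beq$-subspace, and one fixes a finite spanning set of $A$-columns and picks $c$ clearing all the denominators that occur when expressing a general element of $K(B)$ as $A\vec y$. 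Then set $D=\bigl(\begin{smallmatrix}B&\mathbf O&\mathbf O\\ I&cI&-cI\end{smallmatrix}\bigr)$ as in Lemma~\ref{lemDkpr}, which is kernel partition regular over $S$, and note $D$ has $j+u$ rows and $3u$ columns.

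Now suppose $\vec s\in(S\setminus\{0\})^{3u}$ with $D\vec s=\vec 0$. Write $\vec s=(\vec s^{(0)},\vec s^{(1)},\vec s^{(2)})$ in blocks of length $u$. The top block of equations gives $B\vec s^{(0)}=\vec 0$, i.e.\ $\vec s^{(0)}\in K(B)=R(A)$, so $\vec s^{(0)}=A\vec y_0$ for some $\vec y_0\in\beq^v$; the bottom block gives $\vec s^{(0)}=c(\vec s^{(2)}-\vec s^{(1)})$, i.e.\ $\vec s^{(0)}\in cS^u\subseteq$ the set of vectors all of whose coordinates are $c$ times an element of $S$. The choice of $c$ is made so that any element of $K(B)$ lying in $c\cdot(\text{denominators cleared range})$ is $A\vec y$ for some $\vec y\in S^v$; and since $\vec s^{(0)}$ has all entries in $S\setminus\{0\}$, it is nonzero, hence so is $\vec y$. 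Finally, the entries of $A\vec y=\vec s^{(0)}$ are among the entries of $\vec s$ by construction. The main obstacle, and the step requiring the most care, is exactly this choice of $c$ and the accompanying argument that $\vec s^{(0)}\in R(A)\cap (cS)^u$ forces a \emph{representative} $\vec y$ in $S^v$ rather than merely in $\beq^v$: one must exploit that $R(A)$ is finitely generated (as $A$ has finitely many columns, $u,v\in\ben$) so that a single $c$ clears all relevant denominators, and one must check that $\vec y$ can be chosen nonzero, which follows since $\vec s^{(0)}\neq\vec 0$ and $A\vec 0=\vec 0$.
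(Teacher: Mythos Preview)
Your overall architecture is correct and is exactly the paper's: apply Lemma~\ref{matrixB} to get $B$ with $K(B)=R(A)$, then plug $B$ into Lemma~\ref{lemDkpr} to build $D$; given $\vec s=(\vec s^{(0)},\vec s^{(1)},\vec s^{(2)})$ in the kernel of $D$, the top block gives $\vec s^{(0)}\in K(B)=R(A)$ and the bottom block gives $\vec s^{(0)}=c(\vec s^{(2)}-\vec s^{(1)})\in cS^u$, and the task is to choose $c$ so that these two facts together force a preimage $\vec y\in S^v$.

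Where your proposal is loose is precisely that last step, which you correctly flag as the crux but do not actually carry out. Phrases like ``picks $c$ clearing all the denominators that occur when expressing a general element of $K(B)$ as $A\vec y$'' are not well-defined (those denominators vary with the element), and you never verify the hypothesis $c\in S$ required by Lemma~\ref{lemDkpr}. The paper handles both issues concretely: it first multiplies $A$ by a suitable integer to reduce to the case where all entries of $A$ lie in $S\cap\bez$; it then fixes an invertible $l\times l$ submatrix $A^*$ (where $l=\rank(A)$) and takes $c=|\det(A^*)|$, which lies in $S$ because $S\cap\bez=d\bez$ for $d=\min(S\cap\ben)$ and $\det(A^*)\in d\bez$. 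Writing $\vec z=\vec s^{(0)}/c\in S^u$ and solving $A^*\vec w=\vec z'$ for the first $l$ coordinates $\vec z'$, Cramer's rule gives $cw_i=\pm\det(A^*_i)$, an integer linear combination of entries of $\vec z'$, hence in $S$. Setting $\vec y=(cw_0,\ldots,cw_{l-1},0,\ldots,0)\in S^v\setminus\{\vec 0\}$ then yields $A\vec y=\vec s^{(0)}$. This is your ``clearing denominators'' intuition, made precise via Cramer and the preliminary integer reduction; without that reduction you have no guarantee that your chosen $c$ lies in $S$, and without a fixed right inverse (or Cramer) you have no uniform bound on the denominators of $\vec y$.
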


\begin{proof} We first show that it suffices to assume that the entries of 
$A$ are in $S\cap\bez$. To see this pick $d\in\ben$ such that all entries
of $dA$ are in $S\cap\bez$. (One can take $d$ to be the product of a common multiple
of the denominators of entries of $A$ with the minimum of $S\cap \ben$.)
Then $dA$ is weakly image partition regular over $S$. (If $\varphi$ is a finite
colouring of $S\setminus\{0\}$, we define $\psi$ on $S\setminus\{0\}$
by $\psi(x)=\varphi(dx)$. If the entries of $A\vec x$ are monochromatic
with respect to $\psi$, then the entries of $dA\vec x$ are monochromatic
with respect to $\varphi$.)  Assume that we have
$j<u$ and a $(j+u)\times 3u$ matrix $D$ that is kernel partition
regular over $S$ such that, whenever $\vec s\in (S\setminus\{0\})^{3u}$ and
$D\vec s=\vec 0$, there exists $\vec y\in S^v\setminus\{\vec 0\}$ such that
all entries of $dA\vec y$ are entries of $\vec s$. If the entries
of $dA\vec y$ are entries of $\vec s$ and $\vec z=d\vec y$, then the entries
of $A\vec z$ are entries of $\vec s$.

Thus we assume that the entries of $A$ are in $S\cap\bez$.  Let $l=\rank(A)$.
By rearranging rows and columns, we may assume that the upper left 
$l\times l$ corner $A^*$ of $A$ has linearly independent rows. 
Let $c$ be the absolute value of $\det(A^*)$. 

Since $A$ is weakly image partition regular over $S$ it is weakly image partition
regular over $\beq$.  Let $B$ be as in Lemma \ref{matrixB} with $L=\ohat{l-1}$ so that
$J=\{l,l+1,\ldots,u-1\}$.  Let $j=u-l$ and let $B'$ be the $j\times u$ matrix
whose entry in row $t$ and column $i$ is $b_{t,i}'=b_{l+t,i}$.  Since
$B'A={\bf O}$, we have that $B'$ is kernel partition regular over $\beq$.

Let ${\bf O}$ be the $j\times u$ matrix with all entries equal to $0$,
let $I$ be the $u\times u$ identity matrix, and let
$$D=\left(\begin{array}{ccc}B'&{\bf O}&{\bf O}\\
I&cI&-cI\end{array}\right)\,.$$
By Lemma \ref{lemDkpr}, $D$ is kernel partition regular over $S$.

Let $\vec s\in (S\setminus\{0\})^{3u}$ such that
$D\vec s=\vec 0$.  Let $\vec x\in (S\setminus\{0\})^{u}$ and
$\vec r\in (S\setminus\{0\})^{2u}$ such that $\vec s=\left(\begin{array}{c} \vec x\\
\vec r\end{array}\right)$. Then for $i\in\ohat{u-1}$,
$x_i=c(r_{u+i}-r_{i})$.  Letting $z_i=r_{u+i}-r_{i}$ we have
that each $z_i\in S$. Let $\vec z\,'$ be the first $l$ entries of $\vec z$. 
Let $\vec w$ be the member of $\beq^l$ such that
$A^*\vec w=\vec z\,'$.  Then by Cramer's rule, we have
that for each $i\in\ohat{l-1}$, $cw_i\in S$.

For $i\in\ohat{l-1}$, let $y_i=cw_i$ and for
$i\in\{l,l+1,\ldots,v\}$, let $y_i=0$.  
Then a routine computation establishes that $A\vec y=\vec x$.
\end{proof}

We ask now whether some version of Theorem \ref{imgcontained} applies
to matrices with infinitely many rows.  (Our proof of Lemma \ref{lemDkpr}
uses Rado's Theorem, so is inherently finite.)

\begin{question}\label{qimage} Let $v\in\ben\cup\{\omega\}$ and
let $A$ be an $\omega\times v$ matrix with rational entries.
Is it true that for some or all nontrivial proper subgroups $S$ of $\beq$,
if $A$ is weakly image partition regular over $S$, then there
is a kernel partition regular matrix $D$ such that whenever
$\vec s$ is in the kernel of $D$ with entries in $S\setminus \{0\}$
there must exist $\vec y\in S^v$ such that all entries of $A\vec y$ are
entries of $\vec s$? \end{question} 

We note that if $\beq$ is given any reasonable topology (more
precisely, any topology in which $0$ has a neighbourhood distinct from $\beq$)
and $\beq^u$ and $\beq^v$ have
the product topology, it is routine to establish that for any $u,v\in\ben\cup\{\omega\}$, the 
$u\times v$ matrices with finitely many non-zero entries in each row 
correspond precisely to the continuous linear transformations from $\beq^v$ to $\beq^u$.
(One uses the fact that for each $a\in \beq\setminus\{0\}$, there is some
$t\in\beq$ such that $at\notin U$.)

We also remark that the theorems in this section, when proved in the context of an arbitrary
field $F$, provide us with information about continuous linear maps from $F^v$ to $F^u$. 
For example, Lemma \ref{matrixB} implies that the range of every such map is closed.

\section{Image partition regularity and weak image\\ partition regularity over $\beq$}

We saw in Theorem \ref{ipreqwipr} that for any commutative cancellative
semigroup $S$ with at least three elements and any $u\times v$ matrix 
$A$ that is weakly image partition regular over $S$ there is a
$u\times (2\cdot v)$ matrix that is image partition regular over $S$ and
has exactly the same images as $A$. The following corollary of the 
results of Section 2 shows that for $S=\beq$, one need not add
additional columns to at least get the same images with entries in 
$S\setminus\{0\}$.

\begin{corollary}\label{AgetsC}   Let $u,v\in\ben\cup\{\omega\}$ and let 
$A$ be a $u\times v$ matrix with rational entries that is weakly image partition regular over $\beq$.
Then there exist a set $T\subseteq u$ and a $u\times T$ matrix $D$ that is image partition regular over
$\beq$ such that $|T|\leq v$ and
$$\{A\vec x:\vec x\in\beq^v\}\cap(\beq\setminus\{0\})^u=
\{D\vec y:\vec y\in (\beq\setminus\{0\})^T\}\cap(\beq\setminus\{0\})^u\,.$$
\end{corollary}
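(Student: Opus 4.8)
The plan is to combine Corollary \ref{stronglyIPR} with the passage from $C$ to a matrix with at most $v$ columns. Since $A$ is weakly image partition regular over $\beq$, Corollary \ref{stronglyIPR} hands us a $u\times u$ matrix $C$ with rational entries that is image partition regular over $\beq$, satisfies $C^2=C$, and has $R(C)=R(A)$. Intersecting with $(\beq\setminus\{0\})^u$ gives immediately
$$\{A\vec x:\vec x\in\beq^v\}\cap(\beq\setminus\{0\})^u = \{C\vec y:\vec y\in\beq^u\}\cap(\beq\setminus\{0\})^u\,.$$
So it remains to replace $C$ (which has $u$ columns) by a matrix $D$ with a column-index set $T\subseteq u$ of size at most $v$, without changing the range.

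The key observation is that $\dim R(C)=\dim R(A)\le v$ (the range of $A$ is a quotient of $\beq^v$, hence has dimension at most $v$; if $v=\omega$ this is the statement $|T|\le\omega$). Now I would choose $T\subseteq u$ so that the columns $\langle \vec c_i\rangle_{i\in T}$ of $C$ indexed by $T$ form a basis of $R(C)$ — every column of $C$ lies in $R(C)$, so a maximal linearly independent subset of the columns both has size $\dim R(C)\le v$ and spans $R(C)$. Let $D$ be the $u\times T$ submatrix of $C$ consisting of those columns. Then $R(D)=\mathrm{span}\{\vec c_i:i\in T\}=R(C)=R(A)$, which gives the displayed equality of the corollary once we intersect with $(\beq\setminus\{0\})^u$. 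For the partition regularity of $D$: if $(\beq\setminus\{0\})$ is finitely coloured, image partition regularity of $C$ produces $\vec y\in\beq^u$ with the entries of $C\vec y$ monochromatic and nonzero; since $C\vec y\in R(C)=R(D)$, there is $\vec y'\in\beq^T$ with $D\vec y'=C\vec y$, and by the usual perturbation trick (using that $\beq$ is infinite and $D$ has finitely many nonzero entries per row) one may take $\vec y'$ to have all entries nonzero, so $D$ is image partition regular over $\beq$.

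I would present the perturbation trick explicitly: given any $\vec y'\in\beq^T$, the set of $\vec t\in\bigoplus_{i\in T}\beq$ for which some coordinate of $\vec y'+\vec t$ is zero, or for which $D(\vec y'+\vec t)\ne D\vec y'$, is a proper affine subvariety (it lies in a countable union of hyperplanes together with the kernel complement of $D$), so a suitable $\vec t$ can be added to make $\vec y'$ have no zero entries while fixing $D\vec y'$; in fact it is cleanest to pick $\vec t\in K(D)$ with $\vec y'+\vec t$ nowhere zero, which is possible unless $K(D)$ already forces some coordinate to vanish — but a coordinate forced to vanish on $K(D)$ would make the corresponding column redundant, contradicting that the columns of $D$ are a basis of their span, so no such coordinate exists. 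The main obstacle, such as it is, is being careful in the $v=\omega$ case that "$|T|\le v$" is the only claim being made (so no finiteness is needed there) and that a maximal linearly independent set of columns of $C$ really does span $R(C)$ in infinite dimensions — this is fine because $R(C)$ is spanned by the columns of $C$, so a maximal independent subfamily is a basis by the standard exchange argument.
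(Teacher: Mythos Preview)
There is a genuine gap. The displayed equality in the corollary requires $\vec y\in(\beq\setminus\{0\})^T$, not merely $\vec y\in\beq^T$. From $R(D)=R(A)$ you only obtain
$$\{D\vec y:\vec y\in\beq^T\}\cap(\beq\setminus\{0\})^u=\{A\vec x:\vec x\in\beq^v\}\cap(\beq\setminus\{0\})^u\,,$$
and you never justify why the restriction to $\vec y$ with no zero entries does not shrink the left-hand side. Your ``perturbation trick'' cannot help: you chose the columns of $D$ to be linearly independent, so $K(D)=\{\vec 0\}$ and there is nothing nonzero in $K(D)$ to add to $\vec y'$. Your claim that ``a coordinate forced to vanish on $K(D)$ would make the corresponding column redundant'' is false --- when $K(D)=\{\vec 0\}$ every coordinate vanishes on $K(D)$, and this tells you nothing about linear dependence among the columns of $D$. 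The same gap afflicts your image partition regularity argument for $D$.

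The paper closes this gap by exploiting the specific structure of the $C$ produced in Lemma~\ref{MatrixC}: the set $T$ there is chosen via Lemma~\ref{xinkernel}, and Lemma~\ref{xinkernel}(1) gives $d_{i,j}=\delta_{i,j}$ for $i,j\in T$, so the rows of $D$ indexed by $T$ form the $T\times T$ identity matrix. Hence if $\vec z=D\vec y$ then $y_i=z_i$ for each $i\in T$, and $\vec z\in(\beq\setminus\{0\})^u$ forces $\vec y\in(\beq\setminus\{0\})^T$ automatically. In fact your $C$ from Corollary~\ref{stronglyIPR} is exactly this matrix, and since its columns outside the paper's $T$ are zero, any maximal independent set of columns \emph{is} that $T$ --- so your $D$ happens to have the required identity block. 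But you neither observe nor use this; the argument as written does not establish the stated equality.
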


\begin{proof} Assume first that the rows of $A$ are linearly independent. Then $u\leq v$ and
by Lemma \ref{span},
$\{A\vec x:\vec x\in\beq^v\}\cap(\beq\setminus\{0\})^u=(\beq\setminus\{0\})^u$
so one may let $C$ be the $u\times u$ identity matrix.

So assume that the rows of $A$ are linearly dependent over $\beq$.
By Theorem \ref{AgetsB} pick 
$J\subseteq u$ and a $J\times u$ matrix
$B$ such that $B$ is kernel partition regular over $\beq$ and 
$\{\vec w\in (\beq\setminus\{0\})^u:B\vec w=\vec 0\}=
\{A\vec x:\vec x\in\beq^v\}\cap(\beq\setminus\{0\})^u$.
Since $B$ is kernel partition regular over $\beq$, the kernel
of $B$ is nontrivial so the proof of Lemma \ref{MatrixC} produces 
a $u\times u$ matrix $C$, a set $T\subseteq u$, and $d_{i,j}$ in $\beq$
for $i<u$ and $j\in T$ such that $|T|$ is the dimension of the
kernel of $B$ and for $i,j<u$, $c_{i,j}=\left\{\begin{array}{cl}d_{i,j}&\hbox{if }j\in T\\
0&\hbox{if }j\notin T\,. \end{array}\right.$  Let $D$ be the $u\times T$ 
matrix with entries $d_{i,j}$ for $i<u$ and $j\in T$.  As we saw in the proof of 
Theorem \ref{BgetsC}, 
$$\{\vec x\in (\beq\setminus\{0\})^u:B\vec x=\vec 0\}=\{C\vec y:\vec y\in (\beq\setminus\{0\})^u\}\cap(\beq\setminus\{0\})^u\,,$$
and trivially $\{C\vec y:\vec y\in (\beq\setminus\{0\})^u\}=\{D\vec y:\vec y\in (\beq\setminus\{0\})^T\}$.

Checking the proof of Theorem \ref{AgetsB}, we see that $J=u\setminus L$ where $|L|$ is the
rank of $A$.  And we also see that $B$ is of the form $\left(\begin{array}{cc}E&-I_J\end{array}\right)$,
where $E$ is a $J\times L$ matrix and $I_J$ is the $J\times J$ identity matrix so that the
rank of $B$ is $|J|$.  And we have seen that $|T|$ is the 
dimension of the kernel of $B$, which is $|u\setminus J|=|L|\leq v$.
\end{proof}

We saw in Theorem \ref{notG} that Theorem \ref{AgetsB} does not
extend to any proper subsemigroup of $\beq^+$ or any proper subgroup
of $\beq$.  

In Theorem \ref{notIPRG} we will show that Corollary \ref{AgetsC} does not
extend to any proper subgroup of $\beq$.

\begin{lemma}\label{kcl} Let $G$ be a proper subgroup of $\beq$.  If 
$1\in G$, pick $d\in \ben$ such that $\frac{1}{d}\notin G$ and let
$p=1$. If $1\notin G$, pick $d\in\ben\cap G$ and let $p=d$.
Let $A=\left(\begin{array}{cc}1&0\\ 1&d\\ 1&2d\\ \vdots&\vdots\end{array}\right)$.
Assume that $C$ is an $\omega\times 2$ matrix with rational entries such that
$\{A\vec x:\vec x\in G^2\}\cap(G\setminus\{0\})^\omega\subseteq\{C\vec y:y\in(G\setminus\{0\})^2\}$.
Then there exist $x_0,x_1$ in $G\setminus\{0\}$, $m\neq n$ in $\omega$, and $k\in\beq$
such that $k=dx_0x_1(m-n)$ and for all $l<\omega$,
$kc_{l,0}=pdx_1(l-n)$ and $kc_{l,1}=pdx_0(m-l)$.\end{lemma}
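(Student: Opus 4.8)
The plan is to feed into the containment hypothesis the specific images of $A$ that are forced to be ``nice'' and then read off the required identities from the resulting vectors. First I would identify the relevant images of $A$. For $\vec x=\bigl(\begin{smallmatrix}x_0\\ x_1\end{smallmatrix}\bigr)$ the image $A\vec x$ has $l$-th entry $x_0+ldx_1$. In the case $1\in G$ we have $d\in\ben$ and $p=1$, and for suitable $x_0,x_1\in G\setminus\{0\}$ (for instance $x_0=x_1=1$, noting $1\in G$) every entry $x_0+ldx_1$ lies in $G\setminus\{0\}$, except possibly one entry which is zero; by a small perturbation of $x_0$ (staying in $G$, using $1\in G$) we can arrange all entries nonzero, so $A\vec x\in(G\setminus\{0\})^\omega$. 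In the case $1\notin G$ we have $p=d\in\ben\cap G$, and again taking $x_0,x_1$ to be small positive elements of $G$ the entries $x_0+ldx_1$ lie in $G\setminus\{0\}$. Either way we get $A\vec x\in\{A\vec x:\vec x\in G^2\}\cap(G\setminus\{0\})^\omega$, hence by hypothesis $A\vec x=C\vec y$ for some $\vec y=\bigl(\begin{smallmatrix}y_0\\ y_1\end{smallmatrix}\bigr)\in(G\setminus\{0\})^2$.

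Next I would exploit that this holds for (at least) two essentially different images. Applying the containment to $A\vec x$ and to $A\vec x'$ for two choices $\vec x,\vec x'$, we obtain $\vec y,\vec y'\in(G\setminus\{0\})^2$ with $C\vec y=A\vec x$ and $C\vec y'=A\vec x'$. Writing out row $l$ this says
\[
c_{l,0}y_0+c_{l,1}y_1=x_0+ldx_1,\qquad c_{l,0}y_0'+c_{l,1}y_1'=x_0'+ldx_1'
\]
for all $l<\omega$. This is a $2\times2$ linear system in the unknowns $c_{l,0},c_{l,1}$ with coefficient matrix $\bigl(\begin{smallmatrix}y_0&y_1\\ y_0'&y_1'\end{smallmatrix}\bigr)$, independent of $l$; provided $\vec y,\vec y'$ are linearly independent I can solve it by Cramer's rule, getting $c_{l,0}$ and $c_{l,1}$ as affine functions of $l$ with coefficients built from $y_0,y_1,y_0',y_1',x_0,x_1,x_0',x_1',d$. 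Clearing the common denominator (the determinant $\Delta=y_0y_1'-y_1y_0'$ times whatever is needed) produces exactly a relation of the claimed shape: there is $k\in\beq$ and there are the affine-in-$l$ expressions with $kc_{l,0}$ and $kc_{l,1}$ each linear in $l$. The bookkeeping is to match $k$ with $dx_0x_1(m-n)$ and the linear functions with $pdx_1(l-n)$ and $pdx_0(m-l)$, where $m,n$ encode the ``which row is zero'' data of the two chosen images; concretely, choosing $\vec x'$ to be a scalar-type variant of $\vec x$ shifted so that its zero entry (in $A\vec x'$, thought of over $\beq$) sits in row $m$ while that of $A\vec x$ sits in row $n$ is what pins down the indices $m\ne n$.

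I expect the main obstacle to be two intertwined points. The first is arranging, in both cases $1\in G$ and $1\notin G$, that the two images we apply the hypothesis to are genuinely in $(G\setminus\{0\})^\omega$ \emph{and} that the corresponding vectors $\vec y,\vec y'$ are linearly independent over $\beq$ — linear dependence would make the system unsolvable and must be ruled out, presumably by noting that otherwise $C\vec y$ and $C\vec y'$ would be proportional, contradicting that $A\vec x$ and $A\vec x'$ are not. The second is the precise identification of the parameters: one must choose the two images cleverly (this is where the factor $p$, equal to $1$ or $d$ according to the case, enters, absorbing the discrepancy between ``$d\in G$'' and ``$\frac1d\notin G$'') so that the Cramer's-rule output literally has the form $k=dx_0x_1(m-n)$, $kc_{l,0}=pdx_1(l-n)$, $kc_{l,1}=pdx_0(m-l)$ rather than merely something projectively equivalent. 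Once the right pair of images is selected, the remaining computation is the routine $2\times2$ solve described above, and $k\in\beq$ is automatic since all data are rational.
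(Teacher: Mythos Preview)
Your overall framework matches the paper's opening moves: feed two carefully chosen images of $A$ into the hypothesis, obtain two preimages under $C$, and solve the resulting $2\times 2$ system for $c_{l,0},c_{l,1}$ as affine functions of $l$. The paper does exactly this with the images $A\bigl(\begin{smallmatrix}p\\0\end{smallmatrix}\bigr)=\overline{p}$ and $A\bigl(\begin{smallmatrix}p\\p\end{smallmatrix}\bigr)$, obtaining $x_0,x_1$ and $y_0,y_1$ in $G\setminus\{0\}$ with $x_0c_{l,0}+x_1c_{l,1}=p$ and $y_0c_{l,0}+y_1c_{l,1}=p+lpd$, and then sets $k=x_1y_0-x_0y_1$. (Note incidentally that the $x_0,x_1$ in the lemma's conclusion are the $C$-preimage coordinates, not $A$-input coordinates as your write-up suggests.)

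The genuine gap is in how you produce the integers $m,n\in\omega$. Your suggestion is to pick $\vec x,\vec x'$ so that $A\vec x$ and $A\vec x'$ have their ``zero entry'' in rows $n$ and $m$ respectively. But if row $m$ of $A\vec x'$ vanishes then $A\vec x'\notin(G\setminus\{0\})^\omega$ and the hypothesis does not apply to it; so this mechanism cannot work. More fundamentally, after the $2\times 2$ solve you know only that $kc_{l,0}$ and $kc_{l,1}$ are affine in $l$ with coefficients built from $x_0,x_1,y_0,y_1\in G\setminus\{0\}$; nothing so far forces those coefficients to take the very specific shape $pdx_1(l-n)$, $pdx_0(m-l)$ with $m,n\in\omega$. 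That is the real content of the lemma, and it is not ``routine bookkeeping''.

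The paper supplies this via a \emph{third} application of the hypothesis, used in a contradiction argument. Assuming $y_0\neq(1+md)x_0$ for every $m\in\omega$, one forms the vector $A\bigl(\begin{smallmatrix}x_0-y_0\\x_0\end{smallmatrix}\bigr)$, whose $l$-th entry is $x_0-y_0+ldx_0$; the assumption guarantees every entry is nonzero, so the hypothesis yields $u,v\in G\setminus\{0\}$ with $C\bigl(\begin{smallmatrix}u\\v\end{smallmatrix}\bigr)=A\bigl(\begin{smallmatrix}x_0-y_0\\x_0\end{smallmatrix}\bigr)$. Solving with the already-known expressions for $kc_{l,0},kc_{l,1}$ forces $u=0$, a contradiction. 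Hence $y_0=(1+md)x_0$ for some $m\in\omega$, and symmetrically $y_1=(1+nd)x_1$; substituting these back gives $k=dx_0x_1(m-n)$ and the stated formulas. Your proposal is missing this step entirely.
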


\begin{proof} We have that $\overline p=A\left(\begin{array}{c} p\\ 0\end{array}\right)$, where
$\overline p$ is the vector all of whose entries are $p$. So pick 
$x_0$ and $x_1$ in $G\setminus\{0\}$ such that for all $l<\omega$, 
$x_0c_{l,0}+x_1c_{l,1}=p$. Also $A\left(\begin{array}{c} p\\ p\end{array}\right)\in
\{A\vec z:\vec z\in G^2\}\cap (G\setminus\{0\})^\omega$ so pick
$y_0$ and $y_1$ in $G\setminus\{0\}$ such that for all $l<\omega$, 
$y_0c_{l,0}+y_1c_{l,1}=p+lpd$.

Let $k=x_1y_0-x_0y_1$.  Then for each $l<\omega$,
$kc_{l,0}=x_1(p+pld-y_1c_{l,1})-x_0y_1c_{l,0}=p(x_1-y_1)+x_1pld$ and
$kc_{l,1}=p(y_0-x_0)-x_0pld$.  Since the values change with $l$, we
have that $k\neq 0$.

We claim that there is some $m<\omega$ such that
$y_0=(1+md)x_0$.  To see this, suppose not and let
$a=x_0-y_0$ and $b=x_0$. Then for $l<\omega$, entry $l$ of
$A\left(\begin{array}{c}a\\ b\end{array}\right)$ is
$x_0-y_0+ldx_0\in G\setminus\{0\}$, so pick
$u$ and $v$ in $G\setminus\{0\}$ such that
$A\left(\begin{array}{c}a\\ b\end{array}\right)=C\left(\begin{array}{c}u\\ v\end{array}\right)$.
Then $kA\left(\begin{array}{c}a\\ b\end{array}\right)=kC\left(\begin{array}{c}u\\ v\end{array}\right)$
so $ka=p(x_1-y_1)u+p(y_0-x_0)v$ and $ka+kbd=p(x_1-y_1)u+x_1pdu+p(y_0-x_0)v-x_0pdv$.
Solving these equations for $u$ and $v$ we get $u=\frac{1}{p}(ax_0+by_0-bx_0)$ and
$v=\frac{1}{p}(ax_1+by_1-bx_1)$.  But then
$u=\frac{1}{p}(x_0(x_0-y_0)+(y_0-x_0)x_0)=0$, a contradiction.
Similarly, we have some $n<\omega$ such that $y_1=(1+nd)x_1$.

Now $k=x_1y_0-y_1x_0=dx_0x_1(m-n)$ and since $k\neq 0$, $m\neq n$.
Also for $l<\omega$, $kc_{l,0}=p(x_1-y_1)+pldx_1=pdx_1(l-n)$ and $kc_{l,1}=pdx_0(m-l)$.
\end{proof}

\begin{theorem}\label{notIPRG} Let $G$ be a proper subgroup of $\beq$.  There is an
$\omega\times 2$ matrix $A$ that is weakly image partition regular over $G$, but there
does not exist an $\omega\times 2$ matrix $C$ with rational entries such that
$$\{A\vec x:\vec x\in G^2\}\cap(G\setminus\{0\})^\omega=
\{C\vec y:\vec y\in (G\setminus\{0\})^2\}\cap(G\setminus\{0\})^\omega\,.$$
\end{theorem}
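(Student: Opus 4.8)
The plan is to use the matrix $A$ from Lemma \ref{kcl} (namely $A = \left(\begin{array}{cc}1&0\\ 1&d\\ 1&2d\\ \vdots&\vdots\end{array}\right)$, whose images are exactly the arithmetic progressions with first term $x_0$ and common difference $dx_1$). First I would verify the easy half: $A$ is weakly image partition regular over $G$. This follows from van der Waerden's Theorem exactly as in the proof of Theorem \ref{notG}: given a finite colouring $\varphi$ of $G\setminus\{0\}$, define a colouring $\psi$ of $\ben$ by $\psi(n)=\varphi(pn)$ (or work through the subgroup structure), find a long monochromatic arithmetic progression, and pull it back; since $\omega\times 2$ means we only need progressions of every finite length, compactness is not even required — but more to the point, any single row-block embeds, and actually one uses that $G$ contains arbitrarily long progressions in any colour class by van der Waerden applied to $p\ben$. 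The entries of the witnessing $\vec x$ may lie in $G$ rather than $G\setminus\{0\}$, which is exactly what ``weakly'' permits.

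Next, the heart of the argument: suppose for contradiction that such a $C$ exists with
$$\{A\vec x:\vec x\in G^2\}\cap(G\setminus\{0\})^\omega=
\{C\vec y:\vec y\in (G\setminus\{0\})^2\}\cap(G\setminus\{0\})^\omega\,.$$
In particular $\{A\vec x:\vec x\in G^2\}\cap(G\setminus\{0\})^\omega\subseteq\{C\vec y:\vec y\in(G\setminus\{0\})^2\}$, so Lemma \ref{kcl} applies and produces $x_0,x_1\in G\setminus\{0\}$, distinct $m,n\in\omega$, and $k\in\beq\setminus\{0\}$ with $k=dx_0x_1(m-n)$ and, for all $l<\omega$,
$$kc_{l,0}=pdx_1(l-n)\quad\text{and}\quad kc_{l,1}=pdx_0(m-l)\,.$$
So the two columns of $C$ are explicitly pinned down: $c_{l,0}=\dfrac{p(l-n)}{x_0(m-n)}$ and $c_{l,1}=\dfrac{p(m-l)}{x_1(m-n)}$. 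Now I would exploit the reverse inclusion: every column of $C$, i.e. every $C\vec y$ with $\vec y$ a standard basis-type vector — more usefully, a cleverly chosen $\vec y\in(G\setminus\{0\})^2$ — must land in $\{A\vec x:\vec x\in G^2\}$, hence be an arithmetic progression with increment in $dG$ and first term in $G$. Evaluating $C\vec y = y_0 c_{\cdot,0} + y_1 c_{\cdot,1}$ with the above formulas, entry $l$ equals $\dfrac{p}{x_0 x_1 (m-n)}\bigl(y_0 x_1(l-n) + y_1 x_0(m-l)\bigr)$, which is linear in $l$ with slope $\dfrac{p(y_0 x_1 - y_1 x_0)}{x_0 x_1(m-n)}$. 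Choosing $y_0, y_1 \in G\setminus\{0\}$ so that this is a genuine progression landing in $(G\setminus\{0\})^\omega$ but whose first term (value at $l=0$) is not in $G$, or whose increment is not in $dG$, contradicts the containment $\{C\vec y:\vec y\in(G\setminus\{0\})^2\}\cap(G\setminus\{0\})^\omega \subseteq \{A\vec x:\vec x\in G^2\}$. Concretely, taking $y_0 = x_0$, $y_1 = x_1$ makes the slope $0$ and the constant value $\dfrac{p}{m-n}\cdot\dfrac{(m-n)}{1} = p$ — wait, that is just the vector $\overline p$ — so instead one picks $y_0,y_1$ to produce a progression whose common difference forces the preimage $\vec x$ to have $x_1 = (\text{increment})/d \notin G$, mirroring the final contradiction $\vec x\notin S^2$ in Theorem \ref{notG}.

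The main obstacle will be the last step: choosing the right $\vec y\in(G\setminus\{0\})^2$ so that $C\vec y$ has all entries in $G\setminus\{0\}$ (so that the reverse inclusion bites) yet its unique $A$-preimage $\vec x$ has an entry outside $G$ — splitting into the two cases $1\in G$ (where $p=1$ and $\tfrac1d\notin G$, so one aims to make $x_1 = \tfrac1d$ or a $G$-multiple thereof) and $1\notin G$ (where $p=d=\min(G\cap\ben)$, so one aims to make $x_0$ or $x_1$ equal to a fraction whose denominator evades $G$). In each case the formulas for $c_{l,0},c_{l,1}$ give $C\vec y$ explicitly; one then solves $A\vec x = C\vec y$, reads off $x_0 = (C\vec y)_0$ and $dx_1 = (C\vec y)_1 - (C\vec y)_0$, and selects $y_0,y_1$ (using that $x_0,x_1\in G$ and that $G$ is a group, so $G$-multiples stay in $G$) to force exactly one of these out of $G$. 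The bookkeeping is routine once the explicit column formulas from Lemma \ref{kcl} are in hand; everything reduces to linear algebra over $\beq$ together with the single arithmetic fact that $G$ omits some rational of the relevant shape.
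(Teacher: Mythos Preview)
Your approach is exactly the paper's, and your formulas for $c_{l,0}$ and $c_{l,1}$ derived from Lemma \ref{kcl} are correct. Two points where you diverge from, or stop short of, the paper's argument.

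First, the easy half is far simpler than you make it: no van der Waerden, no compactness. Since $p\in G$ (in both cases) and $0\in G$, one has $A\left(\begin{smallmatrix}p\\0\end{smallmatrix}\right)=\overline{p}\in(G\setminus\{0\})^\omega$; a constant vector is monochromatic for every colouring. (The paper uses $\left(\begin{smallmatrix}d\\0\end{smallmatrix}\right)$, same idea.)

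Second, you correctly flag the ``main obstacle'' but do not resolve it, and the resolution is the entire content of the proof. The paper's choice is $y_0=(m+1)x_0$ and $y_1=(n+1)x_1$; both lie in $G\setminus\{0\}$ since $m+1,n+1\in\ben$ and $x_0,x_1\in G\setminus\{0\}$. Plugging into your formula, entry $l$ of $C\vec y$ is
\[
\frac{p}{(m-n)}\bigl((m+1)(l-n)+(n+1)(m-l)\bigr)=\frac{p}{(m-n)}\cdot(l+1)(m-n)=p(l+1),
\]
so $C\vec y=(p,2p,3p,\ldots)^T\in(G\setminus\{0\})^\omega$. The reverse inclusion now forces $A\left(\begin{smallmatrix}a\\b\end{smallmatrix}\right)=C\vec y$ for some $a,b\in G$; reading off rows $0$ and $1$ gives $a=p$ and $a+db=2p$, hence $b=p/d$. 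If $p=1$ this is $1/d\notin G$; if $p=d$ this is $1\notin G$. Either way $b\notin G$, the contradiction. Your sketch had all the right ingredients; what was missing was precisely this choice of $\vec y$, which is not obvious from staring at the slope formula alone---one wants the entries of $C\vec y$ to be the \emph{shifted} progression $p,2p,3p,\ldots$ rather than $0,p,2p,\ldots$ (which would fail the $(G\setminus\{0\})^\omega$ requirement at $l=0$), and $(m+1,n+1)$ is engineered to produce exactly that shift.
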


\begin{proof} If 
$1\in G$, pick $d\in \ben$ such that $\frac{1}{d}\notin G$ and let
$p=1$. If $1\notin G$, pick $d\in\ben\cap G$ and let $p=d$.
Let $A=\left(\begin{array}{cc}1&0\\ 1&d\\ 1&2d\\ \vdots&\vdots\end{array}\right)$.
Note that $A$ is weakly image partition regular over $G$ since
$A\left(\begin{array}{c}d\\ 0\end{array}\right)=\overline d$.
Suppose we have an $\omega\times 2$ matrix $C$ with rational entries such that
$$\{A\vec x:\vec x\in G^2\}\cap(G\setminus\{0\})^\omega=
\{C\vec y:\vec y\in (G\setminus\{0\})^2\}\cap(G\setminus\{0\})^\omega\,.$$
Pick $x_0$, $x_1$, $m$, $n$, and $k$ as guaranteed by Lemma \ref{kcl}.
Let $u=(m+1)x_0$ and $v=(n+1)x_1$.  Then given $l<\omega$, entry $l$ of 
$kC\left(\begin{array}{c}u\\ v\end{array}\right)$ is
$kc_{l,0}u+kc_{l,1}v=pdx_0x_1(m-n)(l+1)=kp(l+1)$ so entry $l$ of
$C\left(\begin{array}{c}u\\ v\end{array}\right)$ is $p(l+1)\in G\setminus\{0\}$.
Pick $a$ and $b$ in $G$ such that
$A\left(\begin{array}{c}a\\ b\end{array}\right)=C\left(\begin{array}{c}u\\ v\end{array}\right)$.
Then $a=p$ and $a+db=2p$ so $db=p$.  If $p=1$, this says $b=\frac{1}{d}\notin G$.
If $p=d$, this says $b=1\notin G$.
\end{proof}
 
We do not know whether the result of Corollary \ref{AgetsC} extends to 
arbitrary proper subsemigroups of $\beq^+$, or even whether it extends to $\ben$.
We know that the proof of Theorem \ref{notIPRG} does not work to show that it does
not extend to $\ben$ and the proof of Corollary \ref{AgetsC} does not work
to show that it does extend to $\ben$.  

To see that the proof of Theorem \ref{notIPRG} does not work for $\ben$, 
let $d\in\ben\setminus\{1\}$ so that 
$$A=\left(\begin{array}{cc}1&0\\
1&d\\
1&2d\\
\vdots&\vdots\end{array}\right)\,.$$
Let 
$$C=\left(\begin{array}{cc}1&-1\\
1-d&-1+2d\\
1-2d&-1+4d\\
\vdots&\vdots\end{array}\right)\,.$$
Then 
$\{A\vec x:\vec x\in \bez^2\}\cap\ben^\omega=
\{C\vec y:\vec y\in \ben^2\}\cap\ben^\omega$. (If $A\left(\begin{array}{c}a\\ b\end{array}\right)\in\ben^\omega$,
then $a\in\ben$, $b\in\omega$, and
$A\left(\begin{array}{c}a\\ b\end{array}\right)=C\left(\begin{array}{c}2a+b\\ a+b\end{array}\right)$. And for any
$u$ and $v$ in $\ben$,
$A\left(\begin{array}{c}u-v\\ 2v-u\end{array}\right)=C\left(\begin{array}{c}u\\ v\end{array}\right)$.)

To see that the proof of Corollary \ref{AgetsC} does not work for $\ben$,
let
$$A=\left(\begin{array}{cc}1&0\\
1&2\\
1&4\\
\vdots&\vdots\end{array}\right)\,.$$
 The matrix $B$ produced by
 Lemma \ref{matrixB} is
$$B=\left(\begin{array}{ccccccc}
-1&2&-1&0&0&0&\ldots\\
-2&3&0&-1&0&0&\ldots\\
-3&4&0&0&-1&0&\ldots\\
\vdots&\vdots&\vdots&\vdots&\vdots&\vdots&\ddots
\end{array}\right)\,.$$
This matrix is kernel partition regular over $\ben$ since $B\overline 1=\overline 0$.
Then the matrix $C$ produced by Theorem \ref{BgetsC} is
$$C=\left(\begin{array}{cc} 1&0\\
0&1\\
-1&2\\
-2&3\\
-3&4\\
\vdots&\vdots\end{array}\right)\,.$$
If $A\vec x=C\left(\begin{array}{c}1\\ 2\end{array}\right)$,
then $x_0=1$ and $x_1=\frac{1}{2}$.

\begin{question}\label{qanysg} Let $S$ be a nontrivial proper subsemigroup of $\beq^+$ and
let $G$ be the subgroup of $\beq$ generated by $S$. Let
$u,v\in\ben\cup\{\omega\}$ and let 
$A$ be a $u\times v$ matrix with rational entries that is weakly image partition regular over $S$.
Must there exist $w\leq v$ and a $u\times w$ matrix $C$ such that
$$\{A\vec x:\vec x\in G^v\}\cap(S\setminus\{0\})^u=
\{C\vec y:\vec y\in (S\setminus\{0\})^w\}\cap(S\setminus\{0\})^u\,?$$
\end{question}

A matrix produced in answer to this question would necessarily be image partition regular over $S$.

Notice that, by Theorem \ref{ipreqwipr}, Question \ref{qanysg} has an affirmative answer if
$v=\omega$, so we are concerned with Question \ref{qanysg} when $v\in\ben$.  
In Theorem \ref{almost} we give a partial answer, not for every subsemigroup of $\beq^+$, but
for subsemigroups of $\beq^+$ that are the intersection of the group they generate with $\beq^+$.
This partial answer is strong enough
for us to conclude that for any matrix $A$ that is weakly image partition regular over such a semigroup, there 
is an image partition regular matrix $C$ every relevant image of which is an image of $A$.

We recall the definition of the sign function.

\begin{definition}\label{defsgn} Let $x\in\ber$.
$$\sgn(x)=\left\{\begin{array}{cl}1&\hbox{if }x>0\\
0&\hbox{if }x=0\\
-1&\hbox{if }x<0\,.\end{array}\right.$$
\end{definition}

\begin{lemma}\label{onezerominus} Let $S$ be a nontrivial proper subsemigroup of $\beq$ and
let $G$ be the subgroup of $\beq$ generated by $S$, let
$u\in\ben\cup\{\omega\}$, let $v\in\ben$, and let 
$A$ be a $u\times v$ matrix with rational entries that is weakly image partition regular over $S$.
Then there exists $\vec a\in\{-1,0,1\}^v$ such that whenever $S\setminus\{0\}$ is finitely coloured, there exists
$\vec x\in G^v$ such that the entries of $A\vec x$ are monochromatic and for every $i<v$,
$\sgn(x_i)=a_i$.  Further $\vec a\neq\vec 0$.
\end{lemma}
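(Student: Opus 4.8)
The plan is to extract from weak image partition regularity a single ``universal'' sign vector by a compactness / pigeonhole argument on colourings. The point is that a priori the sign pattern of the solution vector $\vec x$ could depend on the colouring, and we want one pattern that works for all colourings simultaneously.

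First I would set up the following finitary reformulation. For a colouring $\varphi$ of $S\setminus\{0\}$ that is weakly image partition regular, let $W(\varphi)\subseteq\{-1,0,1\}^v$ be the set of all $\vec a$ that arise as $\langle\sgn(x_i)\rangle_{i<v}$ for some $\vec x\in G^v$ with the entries of $A\vec x$ monochromatic with respect to $\varphi$. Since $A$ is weakly image partition regular over $S$, each $W(\varphi)$ is nonempty, and since $\{-1,0,1\}^v$ is finite (here is where $v\in\ben$ is used), it suffices to show that the family $\{W(\varphi):\varphi\text{ a finite colouring of }S\setminus\{0\}\}$ has the finite intersection property; then any $\vec a$ in the common intersection works. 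To get the finite intersection property, given finitely many colourings $\varphi_1,\ldots,\varphi_r$, form their common refinement $\varphi$ (the colouring whose colour classes are the nonempty intersections of colour classes of the $\varphi_k$); this is again a finite colouring, and any $\vec x$ monochromatic for $\varphi$ is monochromatic for each $\varphi_k$, so $W(\varphi)\subseteq\bigcap_{k=1}^r W(\varphi_k)$, which is therefore nonempty.

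Next, fix such a universal $\vec a\in\{-1,0,1\}^v$. By construction, for every finite colouring $\varphi$ of $S\setminus\{0\}$ there is $\vec x\in G^v$ with the entries of $A\vec x$ monochromatic and $\sgn(x_i)=a_i$ for all $i<v$; this is exactly the required conclusion except for $\vec a\neq\vec 0$. For that last point, suppose $\vec a=\vec 0$. Then for every colouring the only permitted solution vector is $\vec x=\vec 0$, so $A\vec 0=\vec 0$ has monochromatic entries; but the entries of $A\vec x$ are required to lie in $S\setminus\{0\}$ (as noted after Definition \ref{defiprkpr}, it is $S\setminus\{0\}$ being coloured), and $\vec 0$ has no such entry unless $u=0$, i.e.\ $A$ has no rows. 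Since a matrix with no rows is vacuously weakly image partition regular but then the statement is trivially true (indeed $\vec a=\vec 0$ is forced and should be excluded on the grounds that there is nothing to colour), I would simply note that we may assume $u\geq 1$, in which case $A\vec 0=\vec 0$ does not have its entries in $S\setminus\{0\}$, a contradiction; hence $\vec a\neq\vec 0$.

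The main obstacle is the first step: making precise that one sign vector serves all colourings. The naive attempt — pick $\vec a$ from a single cleverly chosen colouring — does not obviously work, so the compactness packaging via the finite intersection property over the finite set $\{-1,0,1\}^v$ is the right move, and it is genuinely where the hypothesis $v\in\ben$ (as opposed to $v=\omega$) is used. Everything after that is bookkeeping: common refinements of finitely many finite colourings, and the observation that $\vec 0$ cannot be a solution because the guaranteed images avoid $0$.
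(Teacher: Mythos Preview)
Your proof is correct and takes essentially the same approach as the paper: both rest on the common refinement of finitely many colourings together with the finiteness of $\{-1,0,1\}^v$. The paper phrases this as a direct contradiction (for each $\vec a$ choose a colouring $\varphi_{\vec a}$ witnessing failure, refine all $3^v$ of them into a single $\psi$, and obtain a contradiction from the sign vector of any $\psi$-monochromatic $A\vec x$), while you package the same idea as a finite-intersection-property argument on the sets $W(\varphi)$; these are equivalent formulations of one proof.
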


\begin{proof} Suppose not and for each $\vec a\in\{-1,0,1\}^v$ pick a finite colouring
$\varphi_{\vec a}$ of $S\setminus\{0\}$ such that whenever $\vec x\in G^v$ and the entries of $A\vec x$ are
monochromatic with respect to $\varphi_{\vec a}$, there is some $i<v$ such that $\sgn(x_i)\neq a_i$.
Let $\psi$ be a finite coloring of $S\setminus\{0\}$ such that for $x,y\in S$, $\psi(x)=\psi(y)$ 
if and only if for every $\vec a\in\{-1,0,1\}^v$, $\varphi_{\vec a}(x)=\varphi_{\vec a}(y)$.
Pick $\vec x\in G^v$ such that the entries of $A\vec x$ are monochromatic with respect to 
$\psi$. Letting $\vec a_i=\sgn(x_i)$ for each $i<v$ yields a contradiction. Trivially 
$\vec a\neq\vec 0$.
\end{proof}

\begin{theorem}\label{almost}  Let $G$ be a nontrivial subgroup of $\beq$, let
$S=G\cap\beq^+$, let
$u\in\ben\cup\{\omega\}$, let $v\in\ben$, and let 
$A$ be a $u\times v$ matrix with rational entries that is weakly image partition regular over $S$.
Then there exists a $u\times v$ matrix $C$ with rational entries (whose entries are in the subgroup of
$\beq$ generated by the entries of $A$) such that
\begin{itemize}
\item[(1)] $\{C\vec y:\vec y\in S^v\}\subseteq \{A\vec x:x\in G^v\}$ and
\item[(2)] whenever $S$ is finitely coloured, there exists $\vec x\in G^v$ such that
the entries of $A\vec x$ are monochromatic and there is some $\vec y\in S^v$ 
such that $C\vec y=A\vec x$.
\end{itemize}
\end{theorem}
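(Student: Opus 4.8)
The plan is to obtain $C$ from $A$ by rescaling each column with a fixed sign, taken from the sign vector produced by Lemma \ref{onezerominus}. First I would note that $S=G\cap\beq^+$ is a nontrivial proper subsemigroup of $\beq$ (it is nonempty because $G$ is nontrivial, and it omits $0$), that the subgroup of $\beq$ generated by $S$ is exactly $G$, and that colouring $S$ is the same thing as colouring $S\setminus\{0\}$ since $0\notin S$. As $A$ is weakly image partition regular over $S$, Lemma \ref{onezerominus} then gives $\vec a\in\{-1,0,1\}^v\setminus\{\vec 0\}$ such that whenever $S$ is finitely coloured there is $\vec x\in G^v$ with the entries of $A\vec x$ monochromatic and $\sgn(x_i)=a_i$ for every $i<v$. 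I would then define $C$ to be the $u\times v$ matrix with $c_{i,j}=a_j a_{i,j}$, so that column $j$ of $C$ is $a_j$ times column $j$ of $A$, and for any $\vec y$ one has $C\vec y=A\vec x$ where $x_j=a_j y_j$ for each $j<v$. Each entry of $C$ equals $0$ or $\pm a_{i,j}$, hence lies in the subgroup of $\beq$ generated by the entries of $A$, and each row of $C$ has only finitely many nonzero entries, as does each row of $A$.

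For part (1), given $\vec y\in S^v$ I would set $x_j=a_j y_j$; since $a_j\in\{-1,0,1\}$ and $y_j\in S\subseteq G$ this gives $\vec x\in G^v$ with $C\vec y=A\vec x$, so $C\vec y\in\{A\vec x:\vec x\in G^v\}$. For part (2), given a finite colouring of $S$, I would apply Lemma \ref{onezerominus} to obtain $\vec x\in G^v$ with $A\vec x$ monochromatic and $\sgn(x_i)=a_i$ for all $i<v$, fix any $s_0\in S$, and define $\vec y$ by $y_j=a_j x_j$ when $a_j\neq 0$ and $y_j=s_0$ when $a_j=0$. Then $\vec y\in S^v$: if $a_j=1$ then $x_j>0$ so $y_j=x_j\in S$; if $a_j=-1$ then $x_j<0$ so $y_j=-x_j\in S$; and if $a_j=0$ then $y_j=s_0\in S$. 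Finally, writing $\vec c_j$ for column $j$ of $A$, column $j$ of $C$ is $a_j\vec c_j$, which is $\vec 0$ when $a_j=0$; since moreover $a_j y_j=x_j$ for $a_j\neq 0$ and $x_j=0$ for $a_j=0$ (as $\sgn(x_j)=a_j$), this yields $C\vec y=\sum_{j\,:\,a_j\neq 0}a_j y_j\vec c_j=\sum_{j<v}x_j\vec c_j=A\vec x$, as required.

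I expect no serious obstacle here; the only point that needs care is that Lemma \ref{onezerominus} may leave some coordinates with $a_j=0$, which would put a forbidden $0$ into $\vec y$ in part (2). This is harmless precisely because those coordinates correspond to zero columns of $C$, so they can be filled with an arbitrary element of $S$ (which exists since $G\neq\{0\}$) without changing $C\vec y$. The analogous concern does not arise in part (1), where $\vec y$ is already given in $S^v$ and only needs to be pushed through to $G^v$ on the $A$ side.
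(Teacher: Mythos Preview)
Your proof is correct and is in fact simpler than the paper's. Both arguments start from the sign vector $\vec a$ supplied by Lemma~\ref{onezerominus}, but they diverge in how they build $C$ from it. You take $C=AD$ with $D$ the diagonal matrix $\mathrm{diag}(a_0,\ldots,a_{v-1})$; columns with $a_j=0$ become zero columns, and you patch the corresponding coordinates of $\vec y$ with an arbitrary element of $S$, which is harmless since those columns contribute nothing to $C\vec y$. The paper instead permutes so that $a_0\neq 0$, builds a lower-triangular $v\times v$ matrix $E$ with $\pm 1$ on the diagonal (encoding the signs $a_j$, with an extra copy of $a_0x_0$ added into every later coordinate to keep it positive even when $a_j=0$), and sets $C=AE^{-1}$.

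What the paper's route buys is that $E$ is invertible with $E^{-1}$ having entries in $\{-1,0,1\}$, so one actually gets the stronger conclusion $\{C\vec y:\vec y\in G^v\}=\{A\vec x:\vec x\in G^v\}$, not just the inclusion in~(1); your diagonal $D$ is singular whenever some $a_j=0$, so you only recover the one-sided containment stated in the theorem. Since the theorem only asks for that containment, your argument suffices, and it avoids the column permutation, the explicit description of $E$, and the determinant computation. One small remark: your notation $c_{i,j}=a_j a_{i,j}$ overloads the letter $a$ (sign-vector entry versus matrix entry), which is forced by the paper's conventions but might be worth flagging for the reader.
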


\begin{proof} Pick $\vec a$ as guaranteed by Lemma \ref{onezerominus}. Since
$\vec a\neq \vec 0$ we may presume (by permuting the columns of $A$) that
$a_0\neq 0$. Define a $v\times v$ matrix $E$ as follows.  For
$j<v$, $e_{j,0}=a_0$ and for $i\in\nhat{v-1}$, let $e_{0,i}=0$.  For $j\in\nhat{v-1}$ and $i<v$, 
$$e_{j,i}=\left\{\begin{array}{cl} 1&\hbox{if }i=j\hbox{ and }a_i\geq 0\\
-1&\hbox{if }i=j\hbox{ and }a_i< 0\\
0&\hbox{otherwise.}\end{array}\right.$$
Then $E$ is a lower triangular matrix whose diagonal entries are all
$1$ or $-1$ so $\det(A)=\pm 1$.  Thus $E$ is invertible and its 
inverse has integer entries.  (In fact, all entries of $E^{-1}$ are $0$, $1$, or
$-1$.)  Let $C=AE^{-1}$.  Conclusion (1) is immediate.

To verify (2), let $S$ be finitely coloured and pick $\vec x\in G^v$
such that the entries of $A\vec x$ are monochromatic and for every
$i<v$, $a_i=\sgn(x_i)$.  Let $y=E\vec x$. Then $C\vec y=A\vec x$ so
it suffices to show that the entries of $\vec y$ are positive.
We have that $y_0=a_0x_0>0$.  For $j>0$, $y_j=a_0x_0$ if $x_j=0$ and
otherwise $y_j=a_0x_0+a_jx_j>0$.
\end{proof}

\begin{corollary}\label{wiprIPR}  Let $G$ be a nontrivial subgroup of $\beq$, let
$S=G\cap\beq^+$, let
$u,v\in\ben\cup\{\omega\}$, and let 
$A$ be a $u\times v$ matrix with rational entries. Then $A$ is
weakly image partition regular over $S$ if and only if there
is a $u\times v$ matrix $C$ with rational entries that is
image partition regular over $S$ and for every $\vec y\in S^v$,
there exists $\vec x\in G^v$ such that $A\vec x=C\vec y$.
\end{corollary}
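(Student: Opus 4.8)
The plan is to prove the two implications separately, splitting the forward implication according to whether $v$ is finite or $v=\omega$; the reverse implication is essentially immediate. First I would record a preliminary fact: the subgroup of $\beq$ generated by $S=G\cap\beq^+$ equals $G$, and in particular $S-S=G$. Indeed this subgroup is contained in $G$, while conversely, since $G$ is a nontrivial subgroup of $\beq$ it contains arbitrarily large positive rationals, so any $g\in G$ can be written as $(g+s)-s$ with $s\in S$ chosen so large that $g+s>0$. Consequently a vector $\vec x\in G^v$ is precisely what ``weakly image partition regular over $S$'' allows as a witness, and also $S=S\setminus\{0\}$ since $0\notin\beq^+$.

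For the reverse implication, suppose $C$ is a $u\times v$ rational matrix that is image partition regular over $S$ and satisfies $\{C\vec y:\vec y\in S^v\}\subseteq\{A\vec x:\vec x\in G^v\}$. Given a finite colouring of $S$, image partition regularity of $C$ produces $\vec y\in S^v$ with the entries of $C\vec y$ monochromatic; the containment then gives $\vec x\in G^v$ with $A\vec x=C\vec y$, so the entries of $A\vec x$ are monochromatic, showing $A$ is weakly image partition regular over $S$. This direction uses nothing beyond the definitions and the preliminary fact.

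For the forward implication when $v\in\ben$, I would simply invoke Theorem \ref{almost}: it furnishes a $u\times v$ rational matrix $C$ with $\{C\vec y:\vec y\in S^v\}\subseteq\{A\vec x:\vec x\in G^v\}$ (its conclusion (1), which is exactly the required image-containment) together with the partition-regularity statement of its conclusion (2). To upgrade that statement to ``$C$ is image partition regular over $S$'' I would note that the witnessing vector produced in the proof of Theorem \ref{almost} is $E\vec x$ for the explicit lower-triangular $E$ there, and it has all entries positive, hence lies in $(S\setminus\{0\})^v$; since the entries of $C\vec y=A\vec x$ are monochromatic, this is what is needed.

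For the forward implication when $v=\omega$, I would use the construction of Theorems \ref{ipreqwipr} and \ref{ipreqwiprgen}: since $2\cdot\omega=\omega$, the matrix $C$ given by $c_{i,2\cdot j}=a_{i,j}$ and $c_{i,2\cdot j+1}=-a_{i,j}$ is again a $u\times v$ matrix with rational entries, and the arguments of those theorems apply here because all the arithmetic takes place in $\beq$. Explicitly, for $\vec y\in S^v$ one puts $x_j=y_{2\cdot j}-y_{2\cdot j+1}\in G$ to get $C\vec y=A\vec x$, giving the containment; and given a finite colouring of $S$, weak image partition regularity of $A$ over $S$ yields $\vec x\in G^v$ with $A\vec x$ monochromatic, whereupon choosing $s_j\in S$ with $s_j>-x_j$ and setting $y_{2\cdot j}=x_j+s_j$, $y_{2\cdot j+1}=s_j$ gives $\vec y\in(S\setminus\{0\})^v$ with $C\vec y=A\vec x$, so $C$ is image partition regular over $S$. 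The only delicate point in the whole argument is guaranteeing that the witnessing $\vec y$ has no zero entries, so that $C$ is genuinely image partition regular rather than merely weakly so; in the finite case this is read off from the explicit $E$ in the proof of Theorem \ref{almost}, and in the case $v=\omega$ from the explicit choice of $s_j$. I do not expect any substantive obstacle.
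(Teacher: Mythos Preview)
Your proposal is correct and follows essentially the same route as the paper: sufficiency is immediate, and for necessity one splits on $v=\omega$ (invoking Theorem \ref{ipreqwipr}) versus $v\in\ben$ (invoking Theorem \ref{almost}). One small simplification: since you already observed that $S=S\setminus\{0\}$, the statement of Theorem \ref{almost}(2) directly gives $\vec y\in S^v=(S\setminus\{0\})^v$ with $C\vec y=A\vec x$ monochromatic, so there is no need to reach into the proof of Theorem \ref{almost} to inspect $E\vec x$; the paper simply cites the statement.
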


\begin{proof} The sufficiency is trivial. For the necessity we can
invoke Theorem \ref{ipreqwipr} if $v=\omega$, so assume that
$v\in\ben$.  Pick $C$ as guaranteed by Theorem \ref{almost}.
One has immediately that for every $\vec y\in S^v$,
there exists $\vec x\in G^v$ such that $A\vec x=C\vec y$.

To see that $C$ is image partition regular over $S$, let
$S$ be finitely coloured. Pick $\vec x$ as guaranteed by 
Theorem \ref{almost}(2) and pick $\vec y\in S^v$ 
such that $C\vec y=A\vec x$.
\end{proof}

\bibliographystyle{plain}

\begin{thebibliography}{99}

\bibitem{BHL} \textsc{Barber, B.}, \textsc{Hindman, N.}, and \textsc{Leader, I.} (2013). Partition regularity in the rationals. 
\textit{J. Comb.\ Theory (Series A)} \textbf{120}  1590--1599. 

\bibitem{C} \textsc{Curtis, C.} (1984) \textit{Linear Algebra}, Fourth edition. Springer-Verlag, New York.

\bibitem{D} \textsc{Deuber, W.} (1973). Partitionen und lineare Gleichungssysteme.
\textit{Math.\  Z.} \textbf{133}  109--123. 

\bibitem{Hal} \textsc{Halmos, P.} (1974) \textit{Finite-dimensional vector spaces}. Springer-Verlag, New York-Heidelberg.


\bibitem{Hb} \textsc{Hindman, N.} (1974). Finite sums from sequences 
within cells of a partition of $\ben$. \textit{J. 
Comb.\  Theory (Series A)} \textbf{17}  1--11. 


\bibitem{H} \textsc{Hindman, N.} (2007). Partition regularity of matrices.
\textit{Integers} \textbf{7(2)} A-18.
{\tt http://www.integers-ejcnt.org/vol7-2.html} 

\bibitem{HL} \textsc{Hindman, N.} and \textsc{Leader, I.} (1993). Image partition regularity of matrices.
\textit{Comb.\ Prob.\ and Comp.} \textbf{2} 437--463. 

\bibitem{HLS} \textsc{Hindman, N.}, \textsc{Leader, I.} and \textsc{Strauss, D.}  (2003)
Open problems in
partition regularity. \textit{Comb.\ Prob.\ and Comp.} \textbf{12} 571--583. 

\bibitem{HSa} \textsc{Hindman, N.} and \textsc{Strauss, D.} (2005). Image partition regularity over the
integers, rationals, and reals. \textit{New York J. Math.} \textbf{11} 519--538. 

\bibitem{HSb} \textsc{Hindman, N.} and \textsc{Strauss, D.} (2014). Separating Milliken-Taylor systems in $\beq$.
\textit{J. Combinatorics} \textbf{5} 305--333. 

\bibitem{J} \textsc{Jech, T.} (1978) \textit{Set Theory}. Academic Press, New York-London. 

\bibitem{R} \textsc{Rado, R.} (1933). Studien zur Kombinatorik.
\textit{Math.\ Z.} \textbf{36}  424--470. 


\end{thebibliography}

\end{document}